\newtheorem{thm}{Theorem\hskip 5mm}[section]
\newtheorem{prop}[thm]{Proposition\hskip 5mm}
\newtheorem{cor}[thm]{Corollary\hskip 5mm}
\newtheorem{lem}[thm]{Lemma\hskip 5mm}
\newtheorem{lemma}[thm]{Lemma\hskip 5mm}
\newtheorem{note}[thm]{Note\hskip 5mm}
\def\s{{\mathfrak s}}
\def\r{{\mathfrak r}}
\def\Sp{{\mathrm {Sp}}}
\def\GL{{\mathrm{ GL}}}
\def\m{{\mathfrak m}}
\def\i{{\mathfrak i}}
\def\j{{\mathfrak j}}
\def\r{{\mathfrak r}}
\def\s{{\mathfrak s}}
\def\k{{\mathfrak k}}
\begin{document}

\title[Unitary groups over local rings]
{Unitary groups over local rings}

\author{J. Cruickshank}
\address{School of Mathematics, Statistics and Applied Mathematics , National University of Ireland, Galway, Ireland}
\email{james.cruickshank@nuigalway.ie}

\author{A. Herman}
\address{Department of Mathematics and Statistics, Univeristy of Regina, Canada}
\email{Allen.Herman@uregina.ca}

\author{R. Quinlan}
\address{School of Mathematics, Statistics and Applied Mathematics , National University of Ireland, Galway, Ireland}
\email{rachel.quinlan@nuigalway.ie}

\author{F. Szechtman}
\address{Department of Mathematics and Statistics, Univeristy of Regina, Canada}
\email{fernando.szechtman@gmail.com}
\thanks{The second and fourth authors were supported in part by NSERC discovery grants}

\maketitle

\section{Introduction}

Let $K$ be a non-archimidean local field with ring of integers
$\mathcal{O}$, maximal ideal $\mathfrak p$ and residue field
$F_q=\mathcal{O}/\mathfrak p$ of odd characteristic, and let
$F$ be a ramified quadratic extension of $K$, with ring of integers
$\mathcal{R}$ and maximal ideal~$\mathcal{P}$. For the reader who is not acquainted
with this terminology, the following elementary properties will suffice for our purposes: $\mathcal{O}$ is a principal ideal domain
with a unique prime element $\pi$, up to multiplication by a unit, so that the non-zero ideals of $\mathcal{O}$ are
$$
\mathcal{O}=(\pi^0)\supset \mathfrak{p}=(\pi)\supset (\pi^2)\supset (\pi^3)\supset\cdots
$$
Moreover, $\mathcal{R}$ is a free $\mathcal{O}$-module of rank 2 obtained by adjoining to $\mathcal{O}$
a square root $\rho$ of $\pi$, i.e., $\mathcal{R}=\mathcal{O}[\rho]\cong \mathcal{O}[t]/(t^2-\pi)$. Here $\mathcal{R}$ is also
a principal ideal domain with a unique prime element up to units, namely $\rho$, so that $\mathcal{P}=(\rho)$, where
$\mathcal{R}/\mathcal{P}\cong F_q$. Furthermore, $K$ and $F$ are
the field of fractions of $\mathcal{O}$ and $\mathcal{R}$, respectively. There is an involution of $F$ that fixes $K$ elementwise,
preserves $\mathcal{R}$, and is determined by $\rho\mapsto -\rho$.

Given $\ell\geq 1$ we let $A=\mathcal{R}/\mathcal{P}^{2\ell}$, which is a finite, commutative, local and principal ring. Note that $A$ inherits an involution
from $\mathcal{R}$, whose fixed ring is $R=\mathcal{O}/\mathfrak{p}^\ell$. In \cite{CMS} the authors construct and decompose the Weil representation
$W:\Sp_{2m}(R)\to\GL(X)$ of the symplectic group. This is a complex representation of degree $|R|^m=q^{\ell m}$,
which generalizes the classical Weil representation of the symplectic group $\Sp_{2m}(q)$, as considered in \cite{PG}.

A unitary group $U_m(A)$ naturally imbeds in $\Sp_{2m}(R)$ and one
may consider the decomposition problem for the restriction of the
Weil representation to $U_m(A)$. This problem is solved in
\cite{HS}. As in the symplectic case, it is convenient to first
split the Weil module into its top and bottom layers. The latter
affords a Weil representation for a unitary group
$U_m(\widetilde{A})$, where $\widetilde{A}$ is a quotient ring of $A$,
so it suffices to achieve a decomposition for the top layer.
Unlike the symplectic case, the irreducible constituents of the
Weil module depend on the equivalence type of the underlying
non-degenerate hermitian form defining $U_m(A)$. There are two
types of such forms and this dependence occurs even when $m$ is
odd, when there is only one isomorphism type of $U_m(A)$.

What concerns us here is the degrees of the irreducible
constituents of the top layer of the Weil module for $U_m(A)$.
According to \cite{HS}, these degrees are equal to
$[U_m(B):S_u]/c$, where $c=2q^{\ell/2}$ if $\ell$ is even,
$c=2q^{(\ell-1)/2}$ if $\ell$ is odd, $B=A/\r^\ell$, $\r$ is the
maximal ideal of $A$, and $S_u$ is the stabilizer in $U_m(B)$ of a
primitive vector $u$, i.e., one belonging to a basis of the free
$B$-module of rank $m$ on which $U_m(B)$ operates. We wish to
compute the orders of the unitary group $U_m(B)$ and the
stabilizer~$S_u$ to determine the above degrees.

The purpose of this paper is twofold. We first develop from scratch a theory of unitary groups over local rings
that allows us to compute the order of finite unitary groups. We do not restrict ourselves to the above set-up,
but work throughout the paper with a general local ring with involution $A$, not necessarily commutative or principal, subject to certain technical
conditions, as described in \S\ref{bher} and \S\ref{sec:class}. Our involution has order 1 or 2, so orthogonal
groups are included as a special case. Our examples of primary interest are when $A=\mathcal{R}/\mathcal{P}^e$,
with involution inherited from $\mathcal{R}$, where $e\geq 1$ and $F/K$ is ramified or unramified, the latter meaning that $\mathcal{R}=\mathcal{O}[\tau]$, where $\tau^2$ is a unit in $\mathcal{O}$ that is not a square, as well as when $A=\mathcal{O}/\mathfrak{p}^e$, with identity involution.

The literature on hermitian forms and unitary groups over rings is quite extensive.
See for instance \cite{MK}, \cite{HO}, \cite{DJ} and references therein.
In spite of all that is known about this subject, we have not been able to find a specific reference
where the orders of these groups are explicitly displayed, except, of course, in the field case, as found in \cite{LG}. Thus, our formulas for these
orders, given in \S\ref{sec:Epimorphism},
may be of interest to the reader. A similar but incorrect formula appears in \cite{HF} in the case of unitary groups
of even rank. For orthogonal groups our formula agrees with a previous one given in \cite{YG} for commutative rings.
Our main tool to compute the desired orders is a canonical homomorphism $U_m(A)\to U_m(\overline{A})$, where $\overline{A}$
is a factor ring of~$A$. The fact that this is an epimorphism can be found in \cite{RB}, and we furnish an
independent and conceptually simple proof of this useful result. We also have a formula, given
in~\S\ref{sec:stapri}, for the order of the stabilizer $S_u$ in $U_m(A)$ of a primitive vector $u$ of arbitrary length. Perhaps
surprisingly, for primitive vectors of non-unit length, $|S_u|$ is independent of the length of $u$.

Our second focus of attention is the computation of the Weil character degrees, which can be achieved by means of the aforementioned
formulas for the orders of $U_m(A)$ and $S_u$. Care is required, as these character degrees are very sensitive to the type of hermitian form defining $U_m(A)$. The easiest case of these
computations, namely when $\ell=1$, was performed in \cite{HS} as it only relies on the orders of the orthogonal
groups over~$F_q$, already available in \cite{NJ}.

We are very grateful to R. Guralnick for his help interpreting the kernel of $U_m(A)\to U_m(\overline{A})$ when
the kernel of $A\to \overline{A}$ has square 0.

\section{Background on hermitian geometry}
\label{bher}

Let $A$ be a ring with identity, not necessarily commutative, with
Jacobson radical $\r$. We will assume that $A$ is local, in the
sense that $A/\r$ is a division ring. Thus, $\r$ is the unique
maximal left and right ideal of $A$ and every element of $A$ not
in $\r$ is in the unit group $A^*$ of $A$. Let $*$ be an
involution of $A$, i.e. an anti-automorphism of $A$ of order $\leq
2$. We will also assume that the elements fixed by $*$ are in the
center of~$A$. Thus they form a ring $R$, which is easily seen to
be local with Jacobson radical $\m=R\cap \r$ and residue field
$R/\m$. The norm map $Q:A^*\to R^*$, given by $a\mapsto aa^*$, is
a group homomorphism whose kernel will be denoted by $N$.

Let $V$ be a right $A$-module and let $h:V\times V\to A$ be a $*$-hermitian form. This means that $h$ is linear in the second variable
and satisfies
$$
h(v,u)=h(u,v)^*,\quad u,v\in V.
$$
Note that $h(u,u)\in R\subseteq Z(A)$ for every $u\in V$.

We can make the dual space $V^*$ into a right $A$-module by means of:
\begin{equation}
\label{dere}
(\alpha a)(v)=a^* \alpha(v),\quad v\in V,a\in A,\alpha\in V^*.
\end{equation}
We have a map $V\to V^*$, associated to $h$, given by $u\mapsto
h(u,-)$; it is a homomorphism of right $A$-modules. We will assume
that $h$ is non-degenerate, in the sense that $V\to V^*$ is an
isomorphism. The subgroup of $\GL(V)$ preserving $h$ will be
denoted by $U$.

We will also assume the existence of an element $d\in A$, necessarily in $A^*$, such that $d+d^*=1$. This is automatic if $2\notin \r$, i.e.,
if the characteristic of $R/\m$ is not~2, in which case we can take $d=1/2$.

Furthermore, we will suppose that $V$ is a free $A$-module of rank
$m\geq 1$. This is well-defined, as can be seen by reducing modulo
$\r$.

To avoid unnecessary repetitions, we fix a basis
$\{v_1,\dots,v_m\}$ of $V$ throughout this section.

\begin{lem}
\label{exun} There is a vector $u\in V$ such that $h(u,u)\in R^*$.
\end{lem}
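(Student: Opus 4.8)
The plan is to exploit the non-degeneracy of $h$ together with the special element $d$ satisfying $d+d^*=1$, whose sole purpose is to rescue the argument in residue characteristic $2$. First I would record the trivial case: if some basis vector satisfies $h(v_i,v_i)\in R^*$ there is nothing to prove, since $h(v_i,v_i)\in R$ always and a unit of $A$ lying in $R$ automatically lies in $R^*$ (because $\m=R\cap\r$). So I may assume that every diagonal entry $h(v_i,v_i)$ is a non-unit, hence lies in $R\cap\r=\m\subseteq\r$.

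Next I would extract an off-diagonal unit from non-degeneracy. Writing $H=(h(v_i,v_j))\in M_m(A)$ for the Gram matrix, the hypothesis that $V\to V^*$ is an isomorphism says exactly that $H$ is invertible. Applying the reduction homomorphism $M_m(A)\to M_m(A/\r)$, the image $\overline H$ is invertible and in particular nonzero, so some entry of $H$ lies outside $\r$, i.e. is a unit. By the previous paragraph no diagonal entry is a unit, so there exist $i\neq j$ with $b:=h(v_i,v_j)\in A^*$.

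Finally I would polarize. Set $c=b^{-1}d$ and $u=v_i+v_jc$. Using that $h$ is linear in the second variable, $*$-semilinear in the first, and that diagonal values lie in $R$, a direct expansion gives
$$
h(u,u)=h(v_i,v_i)+bc+(bc)^*+c^*h(v_j,v_j)c = 1 + h(v_i,v_i)+c^*h(v_j,v_j)c,
$$
since $bc=d$ and $d+d^*=1$. Here $h(v_i,v_i)\in\r$ and $c^*h(v_j,v_j)c\in\r$ because $\r$ is a two-sided ideal and $h(v_j,v_j)\in\r$; hence $h(u,u)\in 1+\r\subseteq A^*$. As $h(u,u)\in R$ is a unit of $A$ it lies in $R^*$, as required.

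The conceptual content — and the only place where anything can go wrong — is this polarization step: over a ring whose residue field has characteristic $2$ one cannot take $d=1/2$, and indeed for an alternating form the conclusion is simply false. The element $d$ with $d+d^*=1$ is precisely what forces the cross term $bc+(bc)^*$ to equal the unit $1$ rather than possibly $0$; everything else is bookkeeping with the radical. I would therefore present the existence of $d$ as the load-bearing hypothesis and organise the write-up around the two cases above.
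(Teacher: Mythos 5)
Your proof is correct, but it takes a genuinely different route from the paper's. The paper argues by contradiction: assuming $h(u,u)\in\m$ for \emph{all} $u\in V$, the polarization identity $h(u+v,u+v)-h(u,u)-h(v,v)=h(u,v)+h(u,v)^*$ forces every trace $h(u,v)+h(u,v)^*$ into $\m$; then surjectivity of $V\to V^*$ produces $u$ with $h(u,v_1)=d$, whose trace is $d+d^*=1\notin\m$, a contradiction. You instead give a constructive argument: after disposing of the case where some $h(v_i,v_i)$ is a unit, you translate non-degeneracy into invertibility of the Gram matrix $H$, reduce modulo $\r$ to extract an off-diagonal unit $b=h(v_i,v_j)$, and exhibit the explicit witness $u=v_i+v_jb^{-1}d$ with $h(u,u)\in 1+\r\subseteq A^*$. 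Both proofs hinge on the same load-bearing hypothesis, the element $d$ with $d+d^*=1$, and your diagnosis of its role is exactly right. Two small remarks. First, your identification ``$V\to V^*$ is an isomorphism if and only if $H\in\GL_m(A)$'' is true but not completely free in the noncommutative setting: it uses that $V^*$ with the twisted structure (\ref{dere}) is free on the dual basis, so that the map $u\mapsto h(u,-)$ has matrix $H$; this deserves a line of verification (the paper's Lemma \ref{nueva} only proves the easy direction of this equivalence, and its own proof of Lemma \ref{exun} avoids the issue entirely by using only surjectivity of $V\to V^*$). Second, your argument is slightly more economical in hypotheses used elsewhere: you only need the diagonal Gram entries of one basis to be non-units, rather than quantifying over all of $V$, and you produce $u$ explicitly rather than existentially --- at the modest cost of the matrix bookkeeping above.
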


\noindent{\it Proof.} Suppose, if possible, that $h(u,u)\in \m$
for all $u\in V$. From
$$
h(u+v,u+v)-h(u,u)-h(v,v)=h(u,v)+h(u,v)^*
$$
we deduce that $h(u,v)+h(u,v)^*\in \m$ for all $u,v\in V$.
Let $\alpha\in V^*$ be the linear functional satisfying $\alpha(v_1)=d$, $\alpha(v_i)=0$ for $i>1$.
By the non-degeneracy of $h$ there is $u\in V$ such that
$h(u,-)=\alpha$. Thus
 $h(u,v_1)=d$, so $h(u,v_1)+h(u,v_1)^*=d+d^*=1$, a contradiction.\qed

\begin{lem}
\label{orz} $V$ has an orthogonal basis $u_1,\dots,u_m$. Any such basis satisfies ~$h(u_i,u_i)\in R^*$.
\end{lem}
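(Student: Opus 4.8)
The plan is to establish existence by induction on $m$ using an orthogonal splitting off a nonisotropic vector, and then to deduce the second assertion from non-degeneracy by reducing modulo $\r$. For the base case $m=1$, Lemma~\ref{exun} supplies $u_1$ with $d_1:=h(u_1,u_1)\in R^*$. Writing $u_1=\sum_i v_i a_i$ and expanding $h(u_1,u_1)=\sum_{i,j}a_i^* h(v_i,v_j)a_j$, I observe that if every $a_i\in\r$ then this sum lies in $\r$ (since $\r$ is a two-sided ideal, stable under $*$ because $\r^*$ is again the unique maximal ideal), contradicting $d_1\in A^*$. Hence some $a_i\in A^*$, so $u_1$ is primitive and therefore a basis of the rank-one module $V$.

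For the inductive step I again take $u_1$ with $d_1=h(u_1,u_1)\in R^*$, which by the same argument is primitive, so that $u_1 A$ is a free rank-one direct summand of $V$. Set $u_1^\perp=\{v\in V:h(u_1,v)=0\}$, the kernel of the right-linear map $v\mapsto h(u_1,v)$. Since $d_1\in R^*\subseteq Z(A)$ is a central unit, for every $v\in V$ the vector $v-u_1 d_1^{-1}h(u_1,v)$ lies in $u_1^\perp$, while $u_1 a\in u_1^\perp$ forces $d_1 a=0$ and hence $a=0$; thus $V=u_1 A\oplus u_1^\perp$. Being a direct summand of a free module over the local ring $A$, the complement $u_1^\perp$ is projective, hence free, and reducing modulo $\r$ shows its rank is $m-1$. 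The restriction of $h$ to $u_1^\perp$ is still non-degenerate: a vector $w\in u_1^\perp$ orthogonal to all of $u_1^\perp$ is also orthogonal to $u_1$, because $h(w,u_1)=h(u_1,w)^*=0$, hence orthogonal to all of $V$, so $w=0$ by non-degeneracy on $V$; surjectivity of $u_1^\perp\to(u_1^\perp)^*$ follows by extending a functional by $0$ on $u_1 A$ and invoking non-degeneracy of $h$ on $V$. By induction $u_1^\perp$ has an orthogonal basis $u_2,\dots,u_m$, and $u_1,\dots,u_m$ is the required orthogonal basis.

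For the second statement, let $u_1,\dots,u_m$ be any orthogonal basis and put $d_i=h(u_i,u_i)\in R$. The matrix of the isomorphism $V\to V^*$, $u\mapsto h(u,-)$, in this basis and its dual is $\mathrm{diag}(d_1,\dots,d_m)$, so non-degeneracy forces this matrix to be invertible in $M_m(A)$. Reducing modulo $\r$ yields an invertible diagonal matrix over the division ring $A/\r$, which forces each $\overline{d_i}\neq 0$, i.e.\ $d_i\notin\r$ and so $d_i\in A^*$. Since $d_i\in R$ and $\m=R\cap\r$, we conclude $d_i\in R\setminus\m=R^*$.

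I expect the inductive step to be the main obstacle, specifically the verification that $u_1^\perp$ is a free module of rank $m-1$ carrying a non-degenerate restriction of $h$. This is where one must combine the orthogonal decomposition $V=u_1 A\oplus u_1^\perp$ with the fact that finitely generated projective modules over a local ring are free and with the inheritance of non-degeneracy by orthogonal complements; the remaining points are routine once the central unit $d_1$ is in hand.
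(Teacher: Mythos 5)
Your proof is correct, and its skeleton is the same as the paper's: induction on $m$, with Lemma~\ref{exun} supplying a vector $u_1$ of unit length $d_1=h(u_1,u_1)\in R^*$ which is then split off orthogonally. The genuine difference is in how the orthogonal complement is handled. The paper first completes $u_1$ to a basis $u_1,v_2,\dots,v_m$ (possible because some coordinate of $u_1$ is a unit) and then performs the explicit Gram--Schmidt substitution $u_i=v_i-u_1[h(u_1,v_i)/h(u_1,u_1)]$, so a basis of the complement $V_2=\mathrm{span}\{u_2,\dots,u_m\}$ is exhibited directly and freeness of rank $m-1$ is immediate. You instead define $u_1^\perp$ abstractly, obtain $V=u_1A\oplus u_1^\perp$ from the projection $v\mapsto v-u_1d_1^{-1}h(u_1,v)$, and then invoke the structural fact that a finitely generated projective module over a (possibly noncommutative) local ring is free, computing the rank by reduction modulo $\r$. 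Both routes work; the paper's is more self-contained, staying entirely within explicit basis manipulations---a relevant economy in this paper, which develops the theory from scratch over a not necessarily commutative $A$---whereas yours trades that explicitness for a standard Nakayama-type module-theoretic fact and, in compensation, supplies details the paper leaves implicit: the verification that the restriction of $h$ to the complement is non-degenerate (injectivity \emph{and} surjectivity of $u_1^\perp\to(u_1^\perp)^*$), and a full proof of the second assertion via invertibility of the diagonal Gram matrix reduced modulo $\r$, which the paper dispatches in one line.
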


\noindent{\it Proof.} The second statement follows from the first and the non-degeneracy of $h$. We prove
the first statement by induction on $m$. If $m=1$ there is nothing to do. Suppose $m>1$ and the result is true for $m-1$.
By Lemma \ref{exun} there is $u\in V$ such that $h(u,u)\in R^*$. We have $u=v_1a_1+\dots+v_m a_m$, where $a_i\in A$.
If all $a_i\in\r$ then $h(u,u)\in\m$, a contradiction. Thus, without loss of generality, we may assume that $a_1\in A^*$.
Then $u_1,v_2,\dots,v_m$ is a basis of $V$. Set
$$
u_i=v_i-u_1[h(u_1,v_i)/h(u_1,u_1)],\quad 1<i\leq m.
$$
Then $u_1,u_2\dots,u_m$ is a basis of $V$ satisfying $h(u_1,u_i)=0$ for $1<i\leq m$. Let $V_1=Au_1$ and $V_2=\mathrm{span}\{u_2,\dots,u_m\}$.
Since $V=V_1\perp V_2$, the restriction of $h$ to $V_2$ induces an isomorphism $V_2\to V_2^*$ and the inductive hypothesis applies.\qed

\begin{lem}
\label{orz2} (a) Suppose $u_1,\dots,u_s\in V$ are orthogonal and satisfy $h(u_i,u_i)\in R^*$. Then $u_1,\dots,u_s\in V$
can be extended to an orthogonal basis of $V$ with the same property.

(b) If $V_1$ is a free submodule of $V$ such that the restriction of $h$ to $V_1$ is non-degenerate
there is another such submodule $V_2$ of $V$ such that  $V=V_1\perp V_2$.
\end{lem}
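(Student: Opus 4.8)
The plan is to prove both parts together, reducing everything to the orthogonal extension process already established in Lemma~\ref{orz}.

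For part~(a), I would first observe that the given vectors $u_1,\dots,u_s$ are necessarily linearly independent over $A$: if some nontrivial $A$-combination vanished, then reducing the coefficient relation suitably and pairing against the $u_i$ (using orthogonality and $h(u_i,u_i)\in R^*$) would force all coefficients into $\r$, and a standard reduction modulo $\r$ then shows they are a unit multiple of each other, contradicting $h(u_i,u_i)\in R^*$. More cleanly, I would argue that $W=\mathrm{span}\{u_1,\dots,u_s\}$ is a free submodule on which $h$ restricts non-degenerately, because its Gram matrix $\mathrm{diag}(h(u_1,u_1),\dots,h(u_s,u_s))$ is invertible over $A$. The key step is then to produce an orthogonal complement: set $V_2=\{v\in V : h(u_i,v)=0 \text{ for } 1\leq i\leq s\}$ and show $V=W\perp V_2$. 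Given $v\in V$, the vector $v-\sum_{i=1}^s u_i[h(u_i,u_i)^{-1}h(u_i,v)]$ lies in $V_2$, which gives $V=W+V_2$, and $W\cap V_2=0$ follows from non-degeneracy of $h$ on $W$; this is exactly the Gram--Schmidt projection already used in the proof of Lemma~\ref{orz}. The restriction of $h$ to $V_2$ is non-degenerate (since it is on $W$ and $V=W\perp V_2$), so applying Lemma~\ref{orz} to $V_2$ yields an orthogonal basis $u_{s+1},\dots,u_m$ of $V_2$ with each $h(u_j,u_j)\in R^*$; concatenating gives the desired extension.

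For part~(b), the idea is to bootstrap from part~(a). Since $h$ restricts non-degenerately to $V_1$, I would invoke Lemma~\ref{orz} applied to $V_1$ to obtain an orthogonal basis $u_1,\dots,u_s$ of $V_1$ with $h(u_i,u_i)\in R^*$. These vectors now satisfy the hypotheses of part~(a), so I can extend them to an orthogonal basis $u_1,\dots,u_m$ of $V$ with all diagonal entries units. Setting $V_2=\mathrm{span}\{u_{s+1},\dots,u_m\}$ gives $V=V_1\perp V_2$ with $V_2$ free, as required.

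The step I expect to require the most care is confirming that $V_2$ in part~(a) is itself a \emph{free} module of the correct rank, rather than merely a submodule for which the decomposition holds set-theoretically. The clean way around this is to establish the direct sum decomposition $V=W\oplus V_2$ first via the explicit projection formula, deduce that $V_2\cong V/W$ is the kernel of the projection onto $W$, and then note that because $W$ is a free direct summand of the free module $V$, the complement $V_2$ is finitely generated projective; over the local ring $A$ finitely generated projectives are free, so $V_2$ is free of rank $m-s$. Once freeness is in hand, the appeal to Lemma~\ref{orz} to orthogonalize $V_2$ is routine.
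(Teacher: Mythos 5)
Your proof is correct, but it handles the crucial step --- freeness of the orthogonal complement --- by a genuinely different mechanism than the paper. The paper's proof of (a) spends most of its effort on an explicit basis-exchange induction: using orthogonality and $h(u_i,u_i)\in R^*$, it shows at each stage that some coordinate of $u_{t+1}$ relative to the current basis must be a unit, so that $u_1,\dots,u_s$ extend to an honest basis of $V$; only then does it apply the Gram--Schmidt projection to the adjoined basis vectors, so the complement $M=\mathrm{span}\{z_1,\dots,z_{m-s}\}$ is free by construction and Lemma \ref{orz} finishes. You instead define the complement intrinsically as $V_2=W^\perp$, obtain $V=W\oplus V_2$ from the same projection formula, and then must supply freeness of $V_2$ separately, which you do by quoting that finitely generated projective modules over a local ring are free. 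That fact does hold in the paper's generality (where $A$ may be non-commutative, cf.\ Note \ref{noncom}), by lifting a basis of $V_2/V_2\r$ and applying Nakayama's lemma, so your argument is complete; it is shorter and more conceptual, and it isolates exactly where locality of $A$ enters, at the cost of importing module-theoretic machinery that the paper's elementary, constructive route avoids. Your verification that $h$ is non-degenerate on $W$ and on $V_2$, and your proof of (b) --- orthogonalize $V_1$ by Lemma \ref{orz}, extend by (a), take the span of the added vectors --- coincide with the paper's. One small slip: your first sketch of linear independence is garbled (pairing a relation $u_1a_1+\dots+u_sa_s=0$ against $u_i$ gives $h(u_i,u_i)a_i=0$, hence $a_i=0$ outright; nothing need be forced into $\r$), but the Gram-matrix argument you substitute for it is precisely Lemma \ref{nueva} and is all that is needed.
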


\noindent{\it Proof.}  (a) We have $u_1=v_1a_1+\cdots+v_ma_m$ for some $a_i\in A$.
Since $h(u_1,u_1)\in R^*$ one of these scalars must be a unit, say $a_1$.
Thus $u_1,v_2,\dots,v_m$ is a basis of $V$. Suppose $1\leq t<s$ and we have shown that, up to a reordering of the $v_i$,
the list $u_1,\cdots,u_t,v_{t+1},\dots,v_m$ is a basis of $V$. We have $u_{t+1}=u_1b_1+\cdots+u_tb_t+v_{t+1}b_{t+1}+\cdots+v_mb_m$ for some $b_i\in A$.
Suppose, if possible, that $b_i\in\r$ for all $i\geq t+1$. Then
$$0=h(u_i,u_{t+1})=h(u_i,u_i)b_i+h(u_i,v_{t+1})b_{t+1}+\cdots+h(u_i,v_{m})b_{m},\quad 1\leq i\leq t,
$$
so $b_i\in\r$ for all $1\leq i\leq t$, which contradicts $h(u_{t+1},u_{t+1})\in R^*$. Thus at least one of $b_{t+1},\dots,b_m$ must be a unit,
say $b_{t+1}$, and $u_1,\cdots,u_t,u_{t+1},v_{t+2},\dots,v_m$ is a basis of $V$.

This shows that $u_1,\dots,u_s$ can be extended to a basis $u_1,\dots,u_s,u_{s+1},\dots,u_m$ of~$V$. Let
$$
z_i=u_i-\left([u_1h(u_1,u_i)/h(u_1,u_1)]+\dots+[u_sh(u_s,u_i)/h(u_s,u_s)]\right),\quad s<i\leq m.
$$
Then $u_1,\dots,u_s,z_{1},\dots,z_{m-s}$ is a basis of $V$ satisfying $h(u_i,z_j)=0$. It follows that the restriction
of $h$ to $M=\mathrm{span}\{z_1,\dots,z_{m-s}\}$ is non-degenerate, so by Lemma \ref{orz} $M$ has an orthogonal
basis with lengths in $R^*$.

(b) This follows from (a) and Lemma \ref{orz}.\qed

\begin{lem}
\label{nueva} Let $u_1,\dots,u_s\in V$, with corresponding Gram
matrix $M\in M_s(A)$, defined by $M_{ij}=h(u_i,u_j)$. If $M\in
\GL_m(A)$ then $u_1,\dots,u_s$ are linearly independent.
\end{lem}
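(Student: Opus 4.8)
The plan is to work directly from the definition of linear independence over the (possibly noncommutative) ring $A$. Since $V$ is a right $A$-module, I suppose a dependence relation $u_1 a_1 + \cdots + u_s a_s = 0$ with $a_1, \dots, a_s \in A$, and aim to conclude that every $a_j$ vanishes. The mechanism that converts information about the $u_j$ into information about the scalars $a_j$ is, of course, the form $h$ itself, whose values on pairs of the $u_j$ are exactly the entries of $M$.

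The key step is to test this relation against each $u_i$. Fixing $i$ and applying $h(u_i,-)$ to both sides, the linearity of $h$ in its second variable (which keeps the scalars $a_j$ on the right) turns the relation into
$$
\sum_{j=1}^{s} h(u_i,u_j)\,a_j=\sum_{j=1}^{s} M_{ij}\,a_j=0.
$$
Collecting the scalars into a column vector $a=(a_1,\dots,a_s)^{\mathrm{t}}\in A^s$, these $s$ equations say precisely that $Ma=0$.

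Finally I invoke the hypothesis that $M$ is invertible: multiplying $Ma=0$ on the left by $M^{-1}$ and using associativity of matrix multiplication over $A$ gives $a=(M^{-1}M)a=M^{-1}(Ma)=0$, so all $a_j=0$, as required.

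I do not anticipate a genuine obstacle here: the statement is essentially the observation that the Gram matrix records the action of the maps $h(u_i,-)$ on the $u_j$, so invertibility of $M$ forces any kernel vector of scalars to be zero. The only point needing a little care, since $A$ need not be commutative, is the bookkeeping of left versus right multiplication: the scalars must remain on the right of the $u_j$ (matching the right-module structure and the second-variable linearity of $h$), and the inversion of $M$ must be carried out on the appropriate side. (I read the hypothesis $M\in\GL_m(A)$ as $M\in\GL_s(A)$, i.e.\ $M$ invertible as an $s\times s$ matrix, which is surely the intended statement.)
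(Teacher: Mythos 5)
Your proof is correct and is essentially identical to the paper's: both pair the dependence relation $u_1a_1+\cdots+u_sa_s=0$ against each $u_i$ using linearity of $h$ in the second variable, obtain $Ma=0$ for the column vector $a$ of scalars, and conclude $a=0$ from invertibility of $M$. Your parenthetical reading of the hypothesis as $M\in\GL_s(A)$ is also right; the subscript $m$ in the statement is a typo.
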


\noindent{\it Proof.} Suppose $a_1,\dots,a_s$ satisfy $u_1a_1+\dots+u_sa_s=0$. Then
$$
0=h(u_i,u_1 a_1+\dots+u_s a_s)=h(u_i,u_1)a_1+\cdots+h(u_i,u_s)a_s,\quad 1\leq i\leq s.
$$
This means
$$
M\left(
   \begin{array}{c}
     a_1 \\
     \vdots \\
     a_s \\
   \end{array}
 \right)=\left(
   \begin{array}{c}
     0 \\
     \vdots \\
     0 \\
   \end{array}
 \right).
 $$
Since $M$ is invertible, the result follows.\qed

\section{Classification of hermitian forms}
\label{sec:class}

A vector $v\in V$ is said to be primitive if $v\not\in V\r$. This is equivalent to saying that $v$ belongs to a basis of $V$.
We say that $h$ is isotropic if there is a primitive vector $v\in V$ such that $h(v,v)=0$.

\begin{lem}
\label{hiso} Suppose $h$ is isotropic. Then, given any $r\in R$ there is a primitive vector $v\in V$ satisfying $h(v,v)=r$.
\end{lem}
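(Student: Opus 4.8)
The plan is to localise the problem to a single hyperbolic plane spanned by the given isotropic vector and a suitable partner, and then to solve a trace-type equation inside that plane. By hypothesis there is a primitive $w\in V$ with $h(w,w)=0$. The first step is to find $v_0\in V$ with $h(w,v_0)=1$. Because the map $u\mapsto h(u,-)$ is the isomorphism $V\to V^*$ and $w$ is primitive, the functional $\alpha=h(w,-)$ is primitive in $V^*$; since $\r$ is $*$-invariant, this says exactly that the image of the right $A$-linear map $\alpha\colon V\to A$ is a right ideal not contained in $\r$, hence all of $A$. So some $v_0$ has $h(w,v_0)=1$, and then $h(v_0,w)=h(w,v_0)^*=1$.

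Writing $c=h(v_0,v_0)\in R$, the Gram matrix of $\{w,v_0\}$ is $\left(\begin{smallmatrix}0&1\\1&c\end{smallmatrix}\right)$, which is invertible (an explicit inverse is $\left(\begin{smallmatrix}-c&1\\1&0\end{smallmatrix}\right)$). By Lemma \ref{nueva} the vectors $w,v_0$ are linearly independent, so $W=\mathrm{span}\{w,v_0\}$ is free of rank $2$ and the restriction of $h$ to $W$ is non-degenerate; Lemma \ref{orz2}(b) then yields an orthogonal decomposition $V=W\perp V_2$. Consequently $\{w,v_0\}$ extends to a basis of $V$, and a vector $wa+v_0 b$ is primitive if and only if $a$ or $b$ lies outside $\r$.

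Now fix $r\in R$ and set $s=r-c\in R$. I look for $v=wa+v_0$ with $a\in A$. Expanding and using $h(w,w)=0$, $h(w,v_0)=h(v_0,w)=1$ and $h(v_0,v_0)=c$ gives $h(v,v)=a^*+a+c$, so it remains to solve $a+a^*=s$. This is where the hypothesis on $d$ enters: since $R$ is central and fixed by $*$, the choice $a=ds$ gives $a+a^*=ds+s^*d^*=(d+d^*)s=s$. Therefore $v=w\,d(r-c)+v_0$ has its $v_0$-coordinate equal to the unit $1$, so $v$ is primitive, and $h(v,v)=s+c=r$, as desired.

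The two substantive points are the extraction of the unit value $h(w,v_0)=1$ from primitivity of $w$ via the identification $V\cong V^*$, and the solution of the additive equation $a+a^*=s$ over a possibly noncommutative ring, which relies essentially on the standing assumption that some $d$ satisfies $d+d^*=1$ together with the centrality of $R$. The remaining steps are routine consequences of Lemmas \ref{nueva} and \ref{orz2}.
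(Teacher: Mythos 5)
Your proof is correct and takes essentially the same route as the paper: both arguments work inside the plane spanned by the isotropic vector and a partner vector with unit cross term, and both use the element $d$ with $d+d^*=1$ to solve the resulting trace condition. The differences are only organizational: the paper chooses the partner $w$ with $h(u,w)=d$ so that the coefficient can be the central scalar $s=r-h(w,w)$, and it gets primitivity in one line from $h(u,v)=d\in A^*$, whereas you normalize the cross term to $1$, take the coefficient $a=ds$, and detour through Lemma \ref{orz2}(b) to certify primitivity, where observing that $h(w,v)=1$ is a unit would have sufficed.
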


\noindent{\it Proof.} By assumption $h(u,u)=0$ for some $u\in V$
primitive. By the non-degeneracy of $h$ there is $w\in V$ such
that $h(u,w)=d$. Set $s=r-h(w,w)\in R$ and $v=us+w$. Then
$$
h(v,v)=h(us+w,us+w)=sh(u,w)+sh(w,u)+h(w,w)=s+h(w,w)=r-h(w,w)+h(w,w)=r.
$$
Since $h(u,v)=h(u,w)=d\in A^*$, it follows that $v$ is primitive.\qed

\medskip

We assume for the remainder of the paper that the squaring map of the 1-group $1+\m$ of $R$ is an epimorphism and that $R/\m=F_q$
is a finite field of order $q$ and odd characteristic. Thus $[F_q^*:F_q^{*2}]=2$. It follows that $[R^*:R^{*2}]=2$,
and we fix an element $\epsilon$ of $R^*$ not in $R^{*2}$. Since $R^{*2}\subseteq Q(A^*)$, we infer
$Q(A^*)=R^*$ if $Q$ is surjective and $Q(A^*)=R^{*2}$ otherwise.

\begin{prop}
\label{45} The division ring $A/\r$ is commutative. Moreover,

(a) If the involution that $*$ induces on $A/\r$ is the identity then $Q$ is not surjective and $A/\r\cong F_q$.

(b) If the involution that $*$ induces on $A/\r$ is not the identity then $Q$ is surjective and $A/\r\cong F_{q^2}$.
\end{prop}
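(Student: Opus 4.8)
The plan is to push everything down to the residue division ring $D=A/\r$, equipped with the involution $\bar *$ that $*$ induces (this is well defined since an anti-automorphism preserves the unique maximal ideal $\r$) and with the central subfield $F_q$, namely the image $R/\m$ of the central subring $R$. The cornerstone I would establish first is that \emph{every} $a\in A$ is a root of a monic quadratic with central coefficients: both $a+a^*$ and $aa^*$ are $*$-symmetric, hence lie in $R\subseteq Z(A)$, and substituting $a^*=(a+a^*)-a$ into $aa^*$ yields
\[
a^2-(a+a^*)\,a+aa^*=0 .
\]
Reducing modulo $\r$, every element of $D$ satisfies a quadratic over $F_q$, so $D$ is an algebraic division algebra over the finite field $F_q$ in which every element has degree at most $2$.

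Next I would deduce commutativity, which I expect to be \textbf{the main obstacle}, since neither $A$ nor a priori $D$ is assumed finite. The idea is that algebraicity over a \emph{finite} field forces local finiteness: given $a,b\in D$, the commutative domain $F_q[a]$ is finite-dimensional over $F_q$, hence a finite field $F_{q^s}$; then $F_{q^s}[b]$ is a finite-dimensional domain over the finite field $F_{q^s}$, hence a finite division ring, hence a field by Wedderburn's little theorem. Thus any two elements commute, so $D$ is a commutative field, and since every element has degree at most $2$ over $F_q$ we get $D\cong F_q$ or $D\cong F_{q^2}$. (Alternatively one may simply cite Jacobson's theorem that a division ring algebraic over a finite field is commutative.)

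To separate the two cases I would use the bridge $\bar a\,\bar *(\bar a)=\overline{aa^*}\in F_q$ for all $\bar a\in D$, valid because $aa^*\in R$, together with the fact that $\bar *$ fixes $F_q$ pointwise. In case (a), where $\bar *=\mathrm{id}$, this reads $\bar a^2\in F_q$ for every $\bar a$; but in $F_{q^2}$ the squaring map has image of order $(q^2-1)/2>|F_q^*|$ (as $q$ is odd), so squares do not all land in $F_q$, ruling out $D=F_{q^2}$ and giving $D\cong F_q$. Moreover $\overline{Q(a)}=\bar a^2\in F_q^{*2}$ for $a\in A^*$; since the squaring hypothesis gives $1+\m\subseteq R^{*2}$, the preimage of $F_q^{*2}$ under $R^*\to F_q^*$ is exactly $R^{*2}$, whence $Q(A^*)\subseteq R^{*2}$ and the fixed nonsquare $\epsilon$ is not a norm, so $Q$ is not surjective.

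In case (b), where $\bar *\neq\mathrm{id}$, the fact that $\bar *$ fixes $F_q$ pointwise forces $D\neq F_q$, hence $D\cong F_{q^2}$ and $\bar *$ is the Frobenius $x\mapsto x^q$; then $\bar a\,\bar *(\bar a)=N_{F_{q^2}/F_q}(\bar a)$ is the surjective field norm. To upgrade this residual surjectivity to $Q\colon A^*\to R^*$, I would, for a given $r\in R^*$, lift a residual norm preimage of $\bar r$ to some $a\in A^*$, so that $Q(a)r^{-1}\in 1+\m\subseteq R^{*2}=Q(R^*)$; writing $Q(a)r^{-1}=Q(v)$ with $v\in R^*$ and using that $Q$ is a homomorphism gives $r=Q(av^{-1})$, so $Q$ is onto.
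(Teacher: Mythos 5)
Everything in your proposal outside the commutativity step is correct and runs parallel to the paper: the central quadratic $a^2-(a+a^*)a+aa^*=0$ (valid because $a+a^*\in R\subseteq Z(A)$ can be commuted past $a$), the descent of $*$ to $D=A/\r$, the squaring argument that rules out $D\cong F_{q^2}$ in case (a), and the lifting of residual norm surjectivity through $1+\m\subseteq R^{*2}=Q(R^*)$ in case (b) are all sound (the paper does the last step the same way, just more tersely). The genuine gap is in your primary commutativity argument. Having identified $F_q[a]$ with $F_{q^s}$, you assert that $F_{q^s}[b]$ is a finite-dimensional domain over the finite field $F_{q^s}$. But $F_{q^s}=F_q[a]$ is not known to be central in $D$, nor is $b$ known to normalize it --- that is essentially what you are trying to prove. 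So the set of polynomials in $b$ with coefficients in $F_{q^s}$ need not be closed under multiplication, and the subring generated by $F_{q^s}$ and $b$ has no visible finite spanning set: words $abab\cdots$ need not reduce. Extracting local finiteness from algebraicity is precisely the hard content here (a Kurosh-type problem), and it cannot be had from two applications of Wedderburn's little theorem. Your parenthetical fallback does rescue the proof: since $F_q$ \emph{is} central, $F_q[x]$ is a finite field of order at most $q^2$ for every $x\in D$, hence $x^{q^2}=x$ for all $x\in D$, and Jacobson's commutativity theorem applies. But then the commutativity is carried entirely by that nontrivial citation, not by your ``elementary'' chain.

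It is worth seeing how the paper sidesteps this. It first shows that the fixed field $k$ of the induced involution is exactly $R/\m$ (write $a=(a+a^*)/2+(a-a^*)/2$); in case (a) this gives $D=k\cong F_q$ outright, with no commutativity argument needed. In case (b) it exploits the involution: replacing arbitrary $f,e\in D$ by their skew parts $f_1,e_1$, the relations $f_1^2,\ e_1^2,\ f_1e_1+e_1f_1\in k$ force $k\langle f_1,e_1\rangle$ to be the $k$-span of $1,f_1,e_1,f_1e_1$ --- a genuinely finite-dimensional division algebra over a finite field --- so Wedderburn's little theorem applies legitimately. In short, the involution supplies exactly the finite-dimensionality that bare algebraicity cannot, and that is the idea missing from your main route.
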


\noindent{\it Proof.} We have an imbedding $R/\m\to A/\r$, given by $r+\m\mapsto r+\r$, so we may view
$R/\m$ as  a subfield of $A/\r$. Let $\circ$ be the involution that $*$ induces on $A/\r$ and
let $k$ consist of all elements of $A/\r$ that are fixed by $\circ$. Clearly $k$ contains $R/\m$. Conversely, if $a+\r\in k$ then $a-a^*\in\r$ so
$a=(a+a^*)/2+(a-a^*)/2$ is in $R+\r$. Thus $k=(R+\r)/\r$, i.e., $k=R/\m$.

(a) In this case $A/\r=k=R/\m$. Thus the induced norm
map $(A/\r)^*\to (R/\m)^*$ is the squaring map of $F_q^*$, which is not surjective, so the norm map $A^*\to R^*$
is not surjective.

(b) By assumption $D=A/\r$ properly contains $k$. Note that $D$ is algebraic over $k$. In fact, the minimal polynomial over $k$ of every element
$f\in D\setminus k$ is $(t-f)(t-f^\circ)=t^2-(f+f^\circ)t+ff^\circ\in k[t]$.

Let $f,e\in D$. We wish to see that $f,e$ commute. Let $f_1=f-(f+f^\circ)/2$ and $e_1=e-(e+e^\circ)/2$. Since $(f+f^\circ)/2,(e+e^\circ)/2\in k$,
it suffices to show that $f_1,e_1$ commute. Now $f_1^\circ=-f_1$ and $e_1^\circ=-e_1$, so $e_1f_1+f_1e_1\in k$.
It follows that $k\langle f_1,e_1\rangle$ is the $k$-span of $1,f_1,e_1,f_1e_1$. Thus $k\langle f_1,e_1\rangle$ is a finite dimensional division algebra over $k$, hence a field by Wedderburn's Theorem.

Thus $D$ is a field, algebraic over $k$, and every element of $D\setminus k$ has degree 2 over~$k$. Since every algebraic extension of $k$ is separable, the theorem of the primitive element ensures that $[D:k]=2$, so $D\cong F_{q^2}$. It follows that the norm map $(A/\r)^*\to (R/\m)^*$ induced $*$ is surjective, and hence so is
the norm map $A^*\to R^*$ since the squaring map of $1+\m$ is surjective.\qed

\begin{note}
\label{noncom} {\rm $A$ itself need not be commutative. Indeed,
let $q$ be a power of an odd prime and let $S$ be the skew
polynomial ring in one variable $t$ over $F_{q^2}$, where $ta=a^q
t$, $a\in F_{q^2}$. We have an involution on $S$ given by
$a_0+a_1t+a_2t^2\cdots\mapsto a_0^q-a_1t+a_2^q t^2+\cdots$. Let
$n\geq 2$ and set $A=S/(t^n)$, which inherits an involution $*$
from~$S$.
 The set $R$ of elements fixed by $*$ is $b_0+b_2t^2+b_4t^4+\cdots$, where $b_i\in F_q$,
which is central in~$A$. Moreover, $A$ is a non-commutative principal ring, with $A/\r\cong F_{q^2}$, $R/\m\cong F_q$.
Since $|\m|$ is odd, the squaring map of $1+\m$ is an automorphism.}
\end{note}

\begin{prop}
\label{casom2} Suppose $m\geq 2$.
Then given any unit $r\in R$ there is a primitive vector $v\in V$ satisfying $h(v,v)=r$.
\end{prop}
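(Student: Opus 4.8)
The plan is to reduce the problem modulo the radical, solve it over the residue field, and then correct the solution by rescaling with a central unit. Fix an orthogonal basis $u_1,\dots,u_m$ as furnished by Lemma \ref{orz}, and write $d_i=h(u_i,u_i)\in R^*$. For $v=u_1a_1+\cdots+u_ma_m$ the centrality of the $d_i$ gives $h(v,v)=\sum_i d_i\,a_i^*a_i$, and each $a_i^*a_i$ lies in $R$. Such a $v$ is primitive precisely when some $a_i$ is a unit, that is, when the reduction $\bar v$ of $v$ in $\bar V=V/V\r$ is nonzero.

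I would first seek a primitive $v_0$ with $h(v_0,v_0)\equiv r\pmod{\m}$. Passing to $\bar V$, a vector space of dimension $m$ over the residue field $D=A/\r$ (which is a field by Proposition \ref{45}), the induced form $\bar h$ is non-degenerate, since its Gram matrix in the basis $\bar u_1,\dots,\bar u_m$ is diagonal with the units $\bar d_i$ on the diagonal; moreover $\bar h(\bar v,\bar v)$ always lies in the subfield $k=R/\m$. It therefore suffices to represent $\bar r\in k^*$ as $\bar h(\bar v,\bar v)$ for some $\bar v\neq 0$: lifting any such $\bar v$ to $v_0\in V$ yields a primitive vector with $h(v_0,v_0)\in R$ reducing to $\bar r$, whence $h(v_0,v_0)\equiv r\pmod\m$ and in particular $h(v_0,v_0)\in R^*$.

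The representability over the residue field splits according to Proposition \ref{45}. In case (a) we have $D=k=F_q$ and $\bar h(\bar v,\bar v)=\sum_i \bar d_i\bar a_i^{\,2}$ is a non-degenerate quadratic form in $m\geq 2$ variables over a finite field of odd characteristic; a counting argument (each of $\{\bar d_1\bar a_1^{\,2}\}$ and $\{\bar r-\bar d_2\bar a_2^{\,2}\}$ has $(q+1)/2$ elements, so they meet) shows that such a form represents every nonzero scalar, and any representation of $\bar r\neq 0$ uses a nonzero vector. In case (b) we have $D=F_{q^2}$ with $k=F_q$, and $\bar a^*\bar a=\bar a^{\,q+1}$ is the norm $F_{q^2}^*\to F_q^*$, which is surjective; choosing $\bar a_1$ with $\bar a_1^{\,q+1}=\bar r\,\bar d_1^{-1}$ and the remaining $\bar a_i=0$ already does the job. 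This is the step that uses the hypothesis $m\geq 2$, as in case (a) a single variable need not represent $\bar r$.

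Finally I would correct $v_0$ to hit $r$ exactly. Since $r$ and $h(v_0,v_0)$ are units congruent modulo $\m$, the quotient $r\,h(v_0,v_0)^{-1}$ lies in $1+\m$, and by the standing hypothesis that squaring is surjective on $1+\m$ there is $c\in 1+\m$ with $c^2=r\,h(v_0,v_0)^{-1}$. Putting $v=v_0c$ and using that $c=c^*\in R$ is central gives $h(v,v)=c^*h(v_0,v_0)c=c^2h(v_0,v_0)=r$, while $v$ remains primitive because $c$ is a unit. The main obstacle is the residue-field step, specifically the universality of a binary, and hence of any higher-rank, non-degenerate quadratic form over $F_q$ in case (a); the reduction and the lifting are each a direct computation or a single application of the $1+\m$ hypothesis.
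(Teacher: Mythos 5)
Your proof is correct, but it follows a genuinely different route from the paper's. The paper splits into cases according to whether $h$ is isotropic: the isotropic case is dispatched by Lemma \ref{hiso}, while in the non-isotropic case the paper works entirely inside $R$, forming the quadratic extension ring $S=R[\delta]$ with $\delta^2=-c$ (where $c=h(u_2,u_2)/h(u_1,u_1)$), proving $S$ is local with maximal ideal $S\m$, and showing that the norm map $S^*\to R^*$ is surjective by combining the surjectivity of the norm of $F_{q^2}/F_q$ with the squaring hypothesis on $1+\m$; representability of $r$ then drops out. You instead reduce modulo $\r$ at the outset, invoke Proposition \ref{45} to split according to the induced involution on $A/\r$, solve the representation problem over the residue field (the $(q+1)/2$-counting argument for a binary diagonal quadratic form in case (a), where $m\geq 2$ is genuinely needed, and norm surjectivity of $F_{q^2}/F_q$ in case (b)), and then lift, correcting by a square root in $1+\m$; the centrality of $R$ makes the rescaling $v=v_0c$ behave as claimed. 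Your Hensel-style argument is arguably cleaner: it avoids both the isotropy dichotomy and the auxiliary ring $S$, treats the two kinds of residue situations uniformly, and invokes the $1+\m$ hypothesis exactly once. What the paper's argument buys in exchange is a statement of independent utility — that the norm map of the "unramified" quadratic extension $S/R$ of local rings is surjective — which is the ring-level analogue of the classical field argument, and it needs no reduction machinery for $V$ at all (indeed the paper only sets up $\overline V$, $\overline h$ after this proposition, though nothing there depends on it, so your use of it is not circular).
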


\noindent{\it Proof.} We consider two cases:

\noindent$\bullet$ $h$ is isotropic. Then Lemma \ref{hiso} applies.

\noindent$\bullet$ $h$ is non-isotropic. By Lemma \ref{orz} there
is an orthogonal basis $u_1,u_2,\dots,u_m$ of $V$ such that
$h(u_i,u_i)\in R^*$. Let $a=h(u_1,u_1)\in R^*$ and
$b=h(u_2,u_2)\in R^*$. If $t_1,t_2\in R^*$ then $v=u_1t_1+u_2t_2$
is primitive, so
$$
0\neq h(v,v)=at_1^2+bt_2^2.
$$
Dividing by $a$ and letting $c=b/a\in R^*$ we have
$$
0\neq t_1^2+ct_2^2,\quad t_1,t_2\in R^*.
$$
In particular, $-c$ is not a square in $R^*$. Set $S=R[t]/(t^2+c)$ and $\delta=t+(t^2+c)\in S$. Then $S=R[\delta]$, $\delta^2=-c$,
and every element of $S$ can be uniquely written in the form $t_1+t_2\delta$ with $t_1,t_2\in R$. We have an involution $s\mapsto\hat{s}$ of $S$
defined by $t_1+t_2\delta=t_1-t_2\delta$, whose corresponding norm map $J:S^*\to R^*$ is given by $s\mapsto s\hat{s}$, that is,
$t_1+t_2\delta\mapsto t_1^2+ct_2^2$.

We claim that $S$ is local with maximal ideal $S\m$. Indeed, let
$t_1,t_2\in R$, not both in $\m$, and consider
$J(t_1+t_2\delta)=t_1^2+ct_2^2$. If one of $t_1,t_2$ is in $\m$
then we see that $t_1^2+ct_2^2\in R^*$ so $t_1+t_2\delta\in S^*$.
Suppose, if possible, that $t_1,t_2\in R^*$ but
$t_1+t_2\delta\notin S^*$. Then $t_1^2+ct_2^2=f\in\m$, so
$$-c=(t_2^{-1})^2(t_1^2-f)=(t_2^{-1})^2t_1^2(1-(t_1^{-1})^2f).
$$
By assumption $1-(t_1^{-1})^2f\in R^{*2}$, so $-c\in R^{*2}$, a contradiction.

Thus $S/S\m$ is a field. We have an imbedding $R/\m\to S/S\m$ so may view $S/S\m$ as a vector space over $R/\m$.
Clearly $\{1+S\m,\delta+S\m\}$ is a basis, so  $S/S\m$ is a quadratic extension of $R/\m$. The involution of $S$
induces the $R/\m$-automorphism of $S/S\m$ of order 2 and the norm map $J$ induces the norm map $(S/S\m)^*\to (R/\m)^*$.
Since $R/\m=F_q$, this map is known to be surjective. We claim that $J$ is surjective. Indeed, let $e\in R^*$. Then there is $s\in S$ and $f\in\m$
such that $$J(s)=e+f=e(1+e^{-1}f).$$
Since $1+e^{-1}f\in R^{*2}$, it follows that $e$ is in the image of $J$, as claimed.

By the claim there are $t_1,t_2\in R$, at least one of them a unit, such that $t_1^2+t_2^2c=r/a$. Thus $v=u_1t_1+u_2t_2$ is primitive and we have
$$
h(v,v)=at_1^2+bt_2^2=r.\qed
$$

\begin{thm}
\label{ort} There is an orthogonal basis $v_1,\dots,v_m$ of $V$ satisfying
$$h(v_1,v_1)=\dots=h(v_{m-1},v_{m-1})=1\text{ and }$$
$$h(v_m,v_m)=1\text{ if }Q(A^*)=R^*,\quad h(v_m,v_m)\in\{1,\epsilon\}\text{ if }Q(A^*)=R^{*2}.$$
\end{thm}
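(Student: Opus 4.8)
My plan is to induct on $m$, stripping off one unit-length orthogonal vector at a time and postponing the only genuine subtlety — the square class of the last length — to a single residual rank-one module. The engine for the inductive step is Proposition \ref{casom2}, and the base case is a rescaling computation.

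For the base case $m=1$ I would start from $V=Au_1$ with $a:=h(u_1,u_1)\in R^*$ (Lemma \ref{orz}), and rescale $u_1\mapsto u_1c$ for $c\in A^*$. Here $h(u_1c,u_1c)=c^*ac=a\,c^*c$, using $a\in R\subseteq Z(A)$, and $c^*c=Q(c^*)$; since $*$ permutes $A^*$, the lengths I can realize are exactly the coset $a\,Q(A^*)$. If $Q(A^*)=R^*$ this coset is all of $R^*$, so I can rescale to length $1$. If $Q(A^*)=R^{*2}$ then, because $[R^*:R^{*2}]=2$ with $\epsilon$ representing the nontrivial class, $a\,R^{*2}$ is either $R^{*2}\ni 1$ or $\epsilon R^{*2}\ni\epsilon$ according to whether $a$ is or is not a square, so I can rescale to length $1$ or $\epsilon$. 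This is precisely the asserted dichotomy, and the condition on $v_1,\dots,v_{m-1}$ is vacuous when $m=1$.

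For the inductive step, assuming $m\geq 2$ and the statement in rank $m-1$, I would use Proposition \ref{casom2} to produce a primitive $v_1\in V$ with $h(v_1,v_1)=1$. Then $V_1=Av_1$ is free of rank $1$ with non-degenerate restricted form (its $1\times1$ Gram matrix is the unit $1$), so Lemma \ref{orz2}(b) gives $V=V_1\perp W$ with $W$ free of rank $m-1$ and $h|_W$ non-degenerate. All the standing hypotheses concern $A$ alone and $Q(A^*)$ is unchanged, so the inductive hypothesis applies to $(W,h|_W)$ and yields an orthogonal basis $v_2,\dots,v_m$ of $W$ with $h(v_2,v_2)=\dots=h(v_{m-1},v_{m-1})=1$ and $h(v_m,v_m)\in\{1\}$ or $\{1,\epsilon\}$ as required. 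Since $V=Av_1\perp W$, the concatenated list $v_1,\dots,v_m$ is the desired orthogonal basis of $V$.

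The step I expect to require the most care is the treatment of the square class when $Q(A^*)=R^{*2}$, and it is exactly what forces the use of Proposition \ref{casom2} rather than Lemma \ref{orz} alone. Beginning from an arbitrary orthogonal basis and only rescaling, each individual length can be normalized into $\{1,\epsilon\}$, but one cannot in general merge two $\epsilon$-lengths, so several non-unit entries may survive. Proposition \ref{casom2} removes this obstruction: in any rank $\geq 2$ piece it realizes $1$ as the length of a primitive vector, letting me split off a genuine unit-length orthogonal summand at each stage and confine the entire square-class ambiguity to the final rank-one factor handled by the base case.
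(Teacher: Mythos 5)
Your proof is correct and follows essentially the same route as the paper: induction on $m$, with Proposition \ref{casom2} supplying a primitive vector of length $1$ in the inductive step, an application of Lemma \ref{orz2} to split it off orthogonally, and the square-class dichotomy settled by rescaling in the rank-one case. The only cosmetic differences are that the paper invokes Lemma \ref{orz2}(a) to extend $u_1$ to an orthogonal basis rather than part (b) to split off a complement, and it leaves the $m=1$ rescaling argument implicit (``the result is clear if $m=1$''), which you spell out.
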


\noindent{\it Proof.} We show the existence of such a basis by
induction on $m$. The result is clear if $m=1$. Suppose $m>1$ and
the result is true for $m-1$. By Proposition \ref{casom2} there is
a primitive vector $u_1\in V$ such that $h(u_1,u_1)=1$. By Lemma
\ref{orz2} there is an orthogonal basis $u_1,u_2,\dots,u_m$ of $V$
such that $h(u_i,u_i)\in R^*$, and the inductive hypothesis
applies.\qed

\medskip

Let $\i$ be a $*$-invariant ideal of $A$ and let $\overline{A}=A/\i$. Then $*$ induces an involution on $\overline{A}$. Moreover,
$\overline{V}=V/\i V$ is a free  $\overline{A}$-module of rank $m$ and the map $\overline{h}: \overline{V}\times  \overline{V}\to \overline{A}$,
given by $\overline{h}(v+\i V,w+\i V)=h(v,w)$, is a non-degenerate hermitian form.

Recall that when $A$ is commutative the discriminant of $h$ is the element of $R^*/Q(A^*)$ obtained by taking the determinant
of the Gram matrix of $h$ relative to any basis of $V$.

\begin{cor}
\label{sant} Let $h_1,h_2$ be non-degenerate hermitian forms on $V$. Then the following conditions are equivalent:

(a) $h_1$ and $h_2$ are equivalent.

(b) The reductions $\overline{h_1}$ and $\overline{h_2}$ of $h_1$ and $h_2$ modulo $\r$ are equivalent.

(c) The discriminants of $\overline{h_1}$ and $\overline{h_2}$ are the same.

\noindent In particular, all non-degenerate hermitian forms are
equivalent when $Q$ is surjective and there are exactly two such
forms, up to equivalence, when $Q$ is not surjective. Moreover, if
$A$ is commutative then the conditions (a)-(c) are equivalent to:

(d) The discriminants of $h_1$ and $h_2$ are the same.
\end{cor}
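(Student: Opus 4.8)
The plan is to establish the cycle (a) $\Rightarrow$ (b) $\Rightarrow$ (a) together with (b) $\Leftrightarrow$ (c), using Theorem \ref{ort} as the engine for the single nontrivial implication, and then to treat (d) separately in the commutative case. Recall first that two forms are equivalent exactly when their Gram matrices relative to the fixed basis are congruent, that is $G_2 = P^* G_1 P$ for some $P \in \GL_m(A)$, where $P^*$ is the conjugate transpose. Since $\r$ is $*$-invariant, reduction modulo $\r$ is a ring homomorphism compatible with the involution that carries $\GL_m(A)$ into $\GL_m(A/\r)$; applying it to such a $P$ yields a congruence over $A/\r$, so (a) $\Rightarrow$ (b) is immediate.

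For the converse (b) $\Rightarrow$ (a) I would first invoke Theorem \ref{ort} to put each $h_i$ in diagonal canonical form: the identity when $Q(A^*)=R^*$, and $\mathrm{diag}(1,\dots,1,c_i)$ with $c_i\in\{1,\epsilon\}$ when $Q(A^*)=R^{*2}$. In the surjective case both canonical forms are the identity, so $h_1\sim h_2$ holds outright. In the non-surjective case Proposition \ref{45}(a) gives $A/\r\cong F_q$ with trivial induced involution, so each $\overline{h_i}$ is an ordinary symmetric form over $F_q$ of discriminant $\overline{c_i}$ modulo squares. The point I must check is that this discriminant actually distinguishes $c_i=1$ from $c_i=\epsilon$: since $[R^*:R^{*2}]=2=[F_q^*:F_q^{*2}]$ and reduction $R^*\to F_q^*$ is surjective, it induces an isomorphism $R^*/R^{*2}\cong F_q^*/F_q^{*2}$, whence $\overline{\epsilon}$ is a non-square. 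Thus $\overline{h_1}\sim\overline{h_2}$ forces $\overline{c_1}\equiv\overline{c_2}$ modulo squares, hence $c_1=c_2$ and $h_1\sim h_2$. This lifting step is where I expect the only real work to lie, and it hinges entirely on correctly matching the residual discriminant to the last diagonal entry.

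Next I would dispatch (b) $\Leftrightarrow$ (c) by the classical classification of forms over the finite residue field. If the involution induced on $A/\r$ is nontrivial we are in the Hermitian case over $F_{q^2}/F_q$, where all non-degenerate forms of a given rank are equivalent and have trivial discriminant, so (b) and (c) hold together vacuously; if it is trivial we are in the orthogonal case over $F_q$, where two non-degenerate symmetric forms of equal rank are equivalent precisely when their discriminants in $F_q^*/F_q^{*2}$ agree, which is exactly (b) $\Leftrightarrow$ (c). The ``in particular'' clauses then follow: when $Q$ is surjective every form is equivalent to the identity form, giving a single class; when $Q$ is not surjective the canonical forms $\mathrm{diag}(1,\dots,1)$ and $\mathrm{diag}(1,\dots,1,\epsilon)$ reduce to forms of distinct discriminant, hence are inequivalent by (a) $\Rightarrow$ (b), giving exactly two classes.

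Finally, for the commutative case I would compare the discriminant of $h$ itself, an element of $R^*/Q(A^*)$, with that of $\overline{h}$ in $(R/\m)^*/Q((A/\r)^*)$. Because the determinant of a Gram matrix reduces to the determinant of its reduction, the natural reduction map between these quotient groups sends the discriminant of $h$ to that of $\overline{h}$; and this map is an isomorphism in both cases---trivial groups when $Q$ is surjective, and $R^*/R^{*2}\cong F_q^*/F_q^{*2}$ when it is not. Consequently equality of the discriminants of $h_1,h_2$ is equivalent to equality of the discriminants of $\overline{h_1},\overline{h_2}$, establishing (c) $\Leftrightarrow$ (d).
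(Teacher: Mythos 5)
Your proposal is correct and takes essentially the intended approach: the paper states this corollary without a separate proof, as an immediate consequence of Theorem \ref{ort}, and your argument is exactly the fleshed-out version of that derivation --- reduce both forms to the canonical diagonal shapes of Theorem \ref{ort}, then distinguish the two canonical forms in the non-surjective case via discriminants over the residue field, using that surjectivity of squaring on $1+\m$ makes $R^*/R^{*2}\to F_q^*/F_q^{*2}$ an isomorphism. Your handling of (d) via the same isomorphism of discriminant groups is likewise the natural completion of what the paper leaves implicit.
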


Given $r_1,\dots,r_m\in R^*$ we say that $h$ is of type $\{r_1,\dots,r_m\}$ if there is a basis $B$ of $V$ relative to which $h$ has
matrix $\mathrm{diag}\{r_1,\dots,r_m\}$. In that case, $h$ is also of type  $\{s_1,\dots,s_m\}$, for $s_i\in R^*$, if and only if
$(r_1\cdots r_m)(s_1\cdots s_m)^{-1}\in Q(A^*)$.

When $m$ is even then $h$ is of type $\{1,-1,\dots,1,-1\}$ (kind I) or
$\{1,-1,\dots,1,-\varepsilon\}$ (kind~II).
When $m$ is odd then $h$ is of type $\{1,-1,\dots,1,-1,-1\}$ (kind $I$) or
$\{1,-1,\dots,1,-1,-\varepsilon\}$ (kind~II). Note that $h$ is of kind I and II if and only if the norm map $Q:A^*\to R^*$
is surjective.

Even when $Q:A^*\to R^*$ is not surjective, if $m$ is odd there is only one unitary group of rank~$m$, regardless of $h$, since $h$ and $\epsilon h$ are non-equivalent and have the same unitary group.

\begin{lem}
\label{tiso} Let $\Lambda$ be the set of all values $h(u,u)$ with
$u\in V$ primitive. Assume that the involution that $*$ induces on
$A/\r$ is the identity.

(a) Suppose $m=1$. If $h$ is of type $\{1\}$ then $\Lambda=R^{*2}$
and if $h$ is of type $\{\epsilon\}$ then $\Lambda=R^*\setminus
R^{*2}$.

(b) Suppose $m=2$. If $h$ is of type $\{1,-1\}$ then $\Lambda=R$ and
if $h$ is of type $\{1,-\epsilon\}$ then $\Lambda=R^*$.

(c) If $m>2$ then  $\Lambda=R$.
\end{lem}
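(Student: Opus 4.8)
The plan is to exploit the dichotomy between isotropic and anisotropic forms: whenever $h$, or a suitable restriction of it, is isotropic, Lemma \ref{hiso} instantly gives $\Lambda = R$, so the only real work is to manufacture isotropic vectors in the right cases and to carry out a direct computation in the genuinely anisotropic ones. Throughout I would use that, by Proposition \ref{45}(a), the standing hypothesis forces $Q(A^*) = R^{*2}$ and $A/\r \cong F_q$ with the induced involution trivial; in particular, reducing a norm $a^*a$ modulo $\m$ yields $\bar a^2 \in F_q$.

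For (a), with $V = A u_1$ and $h(u_1,u_1) = r_0 \in \{1,\epsilon\}$, every primitive vector has the form $u_1 a$ with $a \in A^*$, and $h(u_1 a, u_1 a) = r_0\, a^* a$ since $r_0$ is central. As $a$ runs over $A^*$ the element $a^*a$ runs over $Q(A^*) = R^{*2}$, so $\Lambda = r_0 R^{*2}$; this equals $R^{*2}$ when $r_0 = 1$, and because $[R^*:R^{*2}] = 2$ with $\epsilon \notin R^{*2}$, it equals $R^* \setminus R^{*2}$ when $r_0 = \epsilon$.

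For (b) I would split by kind. If $h$ has type $\{1,-1\}$, pick an orthogonal basis $v_1, v_2$ realizing this type; then $v_1 + v_2$ is primitive and isotropic, so Lemma \ref{hiso} gives $\Lambda = R$. If $h$ has type $\{1,-\epsilon\}$, then $R^* \subseteq \Lambda$ directly by Proposition \ref{casom2}. For the reverse inclusion I would show every primitive $v = v_1 a_1 + v_2 a_2$ satisfies $h(v,v) \in R^*$ by reducing modulo $\m$: there $h(v,v) \equiv \bar a_1^2 - \bar\epsilon\, \bar a_2^2$ in $F_q$, which can vanish only if $\bar\epsilon$ is a square in $F_q$ or both $\bar a_i$ vanish, and both are impossible for primitive $v$. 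Hence $\Lambda = R^*$.

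For (c) the main point is again to produce an isotropic primitive vector, now using $m \geq 3$. Fixing an orthogonal basis $u_1,\dots,u_m$ with all $h(u_i,u_i) \in R^*$ (Lemma \ref{orz}), I would apply Proposition \ref{casom2} to the non-degenerate rank-$2$ restriction of $h$ to $\mathrm{span}\{u_1,u_2\}$ to find $w$ in that subspace with $h(w,w) = -h(u_3,u_3) \in R^*$. Then $v = w + u_3$ is primitive, since its $u_3$-coordinate is a unit, while $h(v,v) = h(w,w) + h(u_3,u_3) = 0$ by orthogonality. Thus $h$ is isotropic and Lemma \ref{hiso} gives $\Lambda = R$. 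I expect this construction to be the crux: the naive route would lift an isotropic vector of the reduced form over $F_q$ by hand, which needs a Hensel-type argument, whereas routing the representability of the single unit $-h(u_3,u_3)$ through Proposition \ref{casom2} sidesteps that entirely.
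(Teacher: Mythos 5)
Your proof is correct and follows essentially the same route as the paper's: the same reduction-mod-$\m$ anisotropy argument rules out non-unit values for type $\{1,-\epsilon\}$ in (b), and part (c) uses the identical construction of an isotropic vector $w+u_3$ by applying Proposition \ref{casom2} to the rank-2 subspace $\mathrm{span}\{u_1,u_2\}$. The only cosmetic difference is that you get $R^*\subseteq\Lambda$ in (b) directly from Proposition \ref{casom2}, whereas the paper invokes the equivalence of types $\{1,-\epsilon\}$ and $\{-1,\epsilon\}$ via Corollary \ref{sant}; both are one-line citations and the content is the same.
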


\noindent{\it Proof.} (a) This is obvious.

(b) If $h$ is of type $\{1,-1\}$ then $h$ is isotropic and Lemma \ref{hiso} applies. Suppose $h$ is of type $\{1,-\epsilon\}$.  If possible, let $v=v_1r+v_2s$ be primitive and satisfy $h(v,v)\in \m$.
One of the coefficients, say $r$, is a unit. Since $rr^*-\epsilon ss^*=f$,
not a unit, it follows that $s$ is also a unit.
Both $rr^*$ and $ss^*$ are squares in $R^*$, so we have
$t_1^2-\epsilon t_2^2=f$ for some $t_1,t_2\in R^*$. Reducing modulo $\m$ we
get $a^2-\delta b^2=0$ in $F_q$, with $a, b,\delta\neq 0$ and $\delta\notin F^{*2}_q$, which is impossible.
This shows that no values from $\m$ are attained. On the other hand, by Corollary \ref{sant} type
$\{1,-\epsilon\}$ is equivalent to type $\{-1,\epsilon\}$, so both squares and non-squares units are attained.

(c) Let $v_1,v_2,v_3,\dots,v_m$ be an orthogonal basis of $V$.
Since $-h(v_3,v_3)\in R^*$, Proposition \ref{casom2} ensures the
existence of a primitive vector $v$ of $v_1A\oplus v_2A$ such that
$h(v,v)=-h(v_3,v_3)$. Then $u=v+v_3$ is primitive and
$h(u,u)=0$.\qed

\section{Primitive vectors of the same length are $U$-conjugate}

The following is essentially a weak form of Witt's extension theorem.

\begin{thm}\label{thm:transitivity}
    Let $v,w\in V$ be primitive vectors satisfying $h(v,v)=h(w,w)$. Then there exists $g\in U$ such that $gv=w$.
\label{crux}
\end{thm}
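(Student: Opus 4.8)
The plan is to reduce the general case to the situation where $v$ and $w$ generate nondegenerate subspaces, and then to patch together an isometry using the orthogonal decompositions furnished by Lemma~\ref{orz2}. The common value $\lambda = h(v,v) = h(w,w) \in R$ is either a unit or lies in $\m$; these two cases behave quite differently, so I would split on them.

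First consider the case $\lambda \in R^*$. Here each of $vA$ and $wA$ is a nondegenerate free rank-$1$ submodule of $V$, since the single-entry Gram matrix $(\lambda)$ is invertible (Lemma~\ref{nueva} then also confirms independence, though it is immediate in rank $1$). By Lemma~\ref{orz2}(b) there are orthogonal complements $V' = (vA)^{\perp}$ and $V'' = (wA)^{\perp}$, both free of rank $m-1$, with $V = vA \perp V'$ and $V = wA \perp W''$. Restricting $h$ to $V'$ and to $V''$ gives two nondegenerate hermitian forms of the same rank. By Corollary~\ref{sant}, equivalence of nondegenerate hermitian forms is governed entirely by the reduction modulo $\r$, equivalently by the discriminant class in $R^*/Q(A^*)$. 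Since $V = vA \perp V' = wA \perp V''$ as orthogonal sums with the two rank-$1$ pieces carrying the \emph{same} length $\lambda$, the discriminants of $h|_{V'}$ and $h|_{V''}$ agree, so these restrictions are equivalent. I would then assemble the desired $g$ by sending $v \mapsto w$ and composing with an isometry $V' \to V''$; the direct sum of these two isometries preserves $h$ and lies in $U$, and carries $v$ to $w$.

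Second, the case $\lambda \in \m$ is where the real obstacle lies, since now $vA$ is degenerate and the above complementation argument fails outright. My plan is to enlarge $v$ to a nondegenerate configuration. Because $v$ is primitive, I would find (using non-degeneracy of $h$ and the existence of the element $d$ with $d+d^* = 1$) a primitive vector $v'$ with $h(v, v') \in A^*$, so that the Gram matrix of $\{v, v'\}$ has the shape $\left(\begin{smallmatrix} \lambda & h(v,v') \\ h(v,v')^* & * \end{smallmatrix}\right)$ with invertible off-diagonal entries; its determinant is then a unit (as $\lambda \in \m$), making $\mathrm{span}\{v, v'\}$ a nondegenerate free rank-$2$ submodule $H$ by Lemma~\ref{nueva}. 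Doing the same for $w$ produces a nondegenerate rank-$2$ submodule $H'$. The key point is that both $H$ and $H'$ are isotropic planes containing a vector of length $\lambda$, and by Lemma~\ref{hiso} (applied inside each plane, both of which are isotropic since their determinant being a unit times an element of $\m$ forces the hyperbolic type) they are abstractly isometric \emph{via an isometry carrying $v$ to $w$}. I would then complement $H$ and $H'$ by Lemma~\ref{orz2}(b) and match their complements using Corollary~\ref{sant}, exactly as in the unit case, to extend the plane isometry to a global $g \in U$ with $gv = w$.

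The main obstacle is the degenerate case $\lambda \in \m$: one must verify that the rank-$2$ spans $H$ and $H'$ are genuinely nondegenerate and hyperbolic, and that the local isometry $H \to H'$ can be chosen to send $v$ to $w$ rather than merely to \emph{some} length-$\lambda$ vector. Establishing that $\{v, v'\}$ is linearly independent with invertible Gram determinant, and then invoking the discriminant-based classification of Corollary~\ref{sant} to match complements, are the load-bearing steps; the unit case is comparatively routine.
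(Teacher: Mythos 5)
Your unit-length case is correct and coincides with the paper's own argument: complement $vA$ and $wA$ by Lemma~\ref{orz2}(b), match the complements using Corollary~\ref{sant}, and take the direct sum of $v\mapsto w$ with an equivalence of the complements. The setup of your non-unit case is also on the right track, and again matches the paper: one passes to a nondegenerate rank-$2$ submodule $vA\oplus uA$ where $h(v,u)=1$ (its Gram matrix has unit determinant $rs-1$ since $r\in\m$), and the complements are matched via Corollary~\ref{sant}.

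The gap is in the one step you yourself flag as key: the existence of an isometry $H\to H'$ carrying $v$ to $w$. Lemma~\ref{hiso} cannot supply this. That lemma is an existence statement (in an isotropic space every $r\in R$ is the length of \emph{some} primitive vector), not a transitivity statement; what you need is that the unitary group of a hyperbolic plane acts transitively on primitive vectors of a given non-unit length, which is exactly the $m=2$ instance of the theorem you are proving, so the appeal is circular. The paper closes this gap with a normalization you are missing: having shown via Corollary~\ref{sant} that the plane $vA\oplus uA$ is congruent to $N=\left(\begin{smallmatrix}0&1\\1&0\end{smallmatrix}\right)$, pick a hyperbolic basis $z_1,z_2$ of that plane and write $v=z_1a_1+z_2a_2$; primitivity of $v$ forces some $a_i\in A^*$, say $a_1$, and then $z=z_2(a_1^{-1})^*$ gives a basis $(v,z)$ of the plane with Gram matrix exactly $C=\left(\begin{smallmatrix}r&1\\1&0\end{smallmatrix}\right)$. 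Doing the same for $w$ yields $(w,z')$ with the \emph{same} Gram matrix $C$, and only then is $v\mapsto w$, $z\mapsto z'$ an isometry of the planes; Corollary~\ref{sant} then handles the complements as in the unit case. Without this normalization (or some equivalent device) your outline does not produce the required $g\in U$.
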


\noindent{\it Proof.} The proof is easy if $m=1$, so we assume $m>1$.

Let $h(v,v)=r=h(w,w)\in R$. Two cases arise.

\noindent$\bullet$ $r$ is a unit. By Lemma \ref{orz2} we have
$V=vA\perp V_1$, where $V_1$ is free of rank $m-1$ and the
restriction of $h$ to $V_1$ is non-degenerate. We have a similar
decomposition $V=wA\perp V_2$. Since $h(v,v)=r=h(w,w)$, we may
apply Corollary \ref{sant} to see that the restrictions of $h$ to
$V_1$ and $V_2$ are equivalent. Thus, there is a linear
isomorphism $f:V_1\to V_2$ that preserves $h$ on both spaces. We
can extend this to a linear isomorphism $V\to V$ that preserves
$h$ and sends $v$ to $w$, as required.

\noindent$\bullet$ $r$ is not a unit. Since $v$ is primitive there is a vector $u\in V$ such that $h(v,u)=1$. The Gram matrix corresponding
to $v,u$ is
$$
M=\left(
  \begin{array}{cc}
    r & 1 \\
    1 & s \\
  \end{array}\right),\text{ where }s=h(u,u).
$$
Since the determinant of $M$ is the unit $-1+rs$, it follows from
Lemma \ref{nueva} that $v,u$ are linearly independent. Now, the
determinant of
$$
N=\left(
  \begin{array}{cc}
    0 & 1 \\
    1 & 0 \\
  \end{array}
\right)
$$
is $-1$. Thus $M$ and $N$ are congruent by Corollary \ref{sant}.
Thus $vA\oplus uA$ has a basis $z_1,z_2$ whose Gram matrix is $N$.
We have $v=z_1a_1+z_2a_2$ with $a_1,a_2\in A$. Since $v$ is
primitive, one of this coefficients is a unit, say $a_1$. Let
$z=(a_1^{-1})^*z_2$. Then $v,z$ are linearly independent with Gram
matrix
$$
C=\left(
  \begin{array}{cc}
    r & 1 \\
    1 & 0 \\
  \end{array}
\right).
$$
By Lemma \ref{orz2} we have $V=(vA\oplus zA)\perp V_1$, where $V_1$ is a free $A$-module of rank $m-2$
and the restriction of $h$ to $V_1$ is non-degenerate.

Likewise there is a vector $z'\in V$ such that $w,z'$ are linearly
independent with Gram matrix $C$, and an analogous decomposition $V=(wA\oplus z'A)\perp V_2$. Since the Gram matrices corresponding to
$v,z$ and $w,z'$ are the same, the restrictions of $h$ to $V_1$ and $V_2$ are equivalent by Corollary \ref{sant},
and the same argument used in the first case applies.\qed

\section{A canonical epimorphism}
\label{sec:Epimorphism}

\begin{prop}
\label{33} Let $\i$ be a $*$-invariant nilpotent ideal of $A$.
Let $u_1,\dots,u_m$ be an orthogonal basis of $V$ and let $v_1,\dots,v_m\in V$.
Suppose
$$
h(v_i,v_j)\equiv h(u_i,u_j)\mod \r,\quad 1\leq i,j\leq m
$$
and
$$
h(v_1,v_i)\equiv h(u_1,u_i)\mod \i,\quad 1\leq i\leq m.
$$
Then there is $w\in \i V$ such that
$$
h(v_1+w,v_i+w)=h(u_1,u_i),\quad 1\leq i\leq m.
$$
\end{prop}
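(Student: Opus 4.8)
The plan is to look for the correction in the form $w=\sum_{k}v_k c_k$ with unknown $c_k\in\i$, and to solve the resulting system of $m$ equations by successive approximation, exploiting the nilpotency of $\i$.

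First I would record that $v_1,\dots,v_m$ is itself a basis of $V$. The first hypothesis says that the Gram matrix $G=(h(v_i,v_j))$ is congruent modulo $\r$ to $(h(u_i,u_j))$, which is diagonal with entries in $R^{*}$ by Lemma~\ref{orz}; hence $G$ is invertible over the local ring $A$, and $v_1,\dots,v_m$ are independent by Lemma~\ref{nueva}. Consequently every element of $\i V$ has the form $\sum_k v_k c_k$ with $c_k\in\i$, so the ansatz costs nothing. Writing $\mathbf c=(c_1,\dots,c_m)^{\mathrm t}$, $r=h(u_1,u_1)\in R^{*}$, and expanding $h(v_1+w,v_i+w)$ bilinearly, the desired identities $h(v_1+w,v_i+w)=h(u_1,u_i)=\delta_{1i}r$ become
\[
G_{1i}+(G\mathbf c)_1+(\mathbf c^{*}G)_i+\mathbf c^{*}G\mathbf c=\delta_{1i}\,r,\qquad 1\le i\le m,
\]
where $\mathbf c^{*}$ denotes the conjugate transpose.

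The crucial point is that $G$ is Hermitian, so $(G\mathbf c)_1$ does not depend on $i$ and equals $q_1^{*}$, where I set $\mathbf q=\mathbf c^{*}G$; moreover $\sigma:=\mathbf c^{*}G\mathbf c=h(w,w)$ lies in $R$, and in $\i^{2}$ once $\mathbf c\in\i^{m}$. Thus the system splits into $q_1+q_1^{*}=r-G_{11}-\sigma$ for $i=1$ and $q_i=-G_{1i}-q_1^{*}-\sigma$ for $i>1$. The second hypothesis gives $r-G_{11}=r-h(v_1,v_1)\in\i$ and $G_{1i}=h(v_1,v_i)\in\i$, so the right-hand side $t_1:=r-G_{11}-\sigma$ of the first equation is a central, $*$-fixed element of $\i$; I can therefore solve it by $q_1=d\,t_1$, using the distinguished element $d$ with $d+d^{*}=1$, and then define $q_i$ for $i>1$ by the displayed formula. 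Recovering $\mathbf c=G^{-1}\mathbf q^{*}$ keeps $\mathbf c\in\i^{m}$, whence $w\in\i V$.

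The one obstruction to this being an immediate solution is that $\sigma$ itself depends on $\mathbf c$; absorbing this quadratic cross term is where the real work lies, and it is the step I expect to be the main obstacle. I would iterate: put $\mathbf q^{(0)}=0$, and given $\mathbf q^{(t)}$ set $\mathbf c^{(t)}=G^{-1}(\mathbf q^{(t)})^{*}$, $\sigma^{(t)}=(\mathbf c^{(t)})^{*}G\,\mathbf c^{(t)}$, and let $\mathbf q^{(t+1)}$ be given by the formulas just obtained (with $\sigma=\sigma^{(t)}$). Since $\mathbf q^{(1)}\in\i^{m}$ and $\sigma$ is quadratic in $\mathbf c$, an easy induction yields $\mathbf q^{(t+1)}\equiv\mathbf q^{(t)}\pmod{\i^{t+1}}$, so nilpotency of $\i$ forces the sequence to stabilise after finitely many steps at a fixed point. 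A final check, reversing the algebra above (again using $d+d^{*}=1$ and the centrality of $t_1$ and $\sigma$), confirms that such a fixed point satisfies all $m$ original equations exactly. Beyond organising this nilpotent iteration, the argument is just linear algebra over $A$, made possible by the invertibility of $G$ and the existence of $d$.
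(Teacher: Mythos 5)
Your proof is correct, and while it runs on the same engine as the paper's --- solve the linearized problem, treat the quadratic term $h(w,w)$ as a higher-order error, and let nilpotency of $\i$ kill the error after finitely many corrections --- it is organized along a genuinely different route. The paper inducts on the nilpotency degree of $\i$: writing $h(u_1,u_i)=h(v_1,v_i)+a_i$, it uses non-degeneracy to pick $w$ with $h(w,v_1)=a_1/2$ and $h(w,v_i)=a_i-a_1/2$ for $i>1$, observes that the resulting error lies in $\i^*\i\subseteq\i^2$, and then invokes the proposition itself with $\i^2$ in place of $\i$; the ideal is squared at each step (Newton-style), and no coordinates or explicit iteration appear. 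You instead justify that $v_1,\dots,v_m$ is a basis exactly as the paper does (invertibility of the Gram matrix $G$ plus Lemma~\ref{nueva}), convert the problem into the system $G_{1i}+q_1^*+q_i+\sigma=\delta_{1i}r$ for $\mathbf{q}=\mathbf{c}^{*}G$, solve the trace equation $q_1+q_1^*=t_1$ with the element $d$ (slightly more general than the paper's $a_1/2$, though $2$ is invertible under the standing assumptions, so both work), and run a Picard-type iteration that gains one power of $\i$ per step, stabilizing at a fixed point by nilpotency. What the paper's organization buys is economy: the inductive hypothesis is the proposition itself, so the error analysis is a single congruence modulo $\i^2$ and no fixed-point bookkeeping is needed. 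What yours buys is explicitness and self-containment: the correction is exhibited in coordinates and nothing must be re-verified at each level. The one place where your write-up is compressed is the ``easy induction'' giving $\mathbf{q}^{(t+1)}\equiv\mathbf{q}^{(t)}\pmod{\i^{t+1}}$; it does go through, but it rests on the facts that $\i^t$ is $*$-invariant and that products such as $\i^{t}A\,\i$ land in $\i^{t+1}$, i.e.\ on $\i$ being a two-sided $*$-invariant ideal, and that point deserves to be said explicitly.
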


\noindent{\it Proof.}  By induction on the nilpotency degree of $\i$. If $\i=(0)$ we may just take $w=0$.
Suppose $\i\neq (0)$ is nilpotent and the result is true for nilpotent ideals of nilpotency degree smaller than that of $\i$.

Let $M$ be the Gram matrix of $h$ relative to $v_1,\dots,v_m$. Our first assumption implies that the reduction of $M$ modulo $\r$
is invertible. Since $M_m(\r)$ is the Jacobson radical of $M_m(A)$, we see that $M$ must be invertible. We deduce from Lemmas \ref{orz2} and \ref{nueva} that  $v_1,\dots,v_m$ is a basis of~$V$.

By hypothesis we have $h(u_1,u_i)=h(v_1,v_i)+a_i$, where $a_i\in\i$ and $a_1\in\i\cap R$.
Since $h$ is non-degenerate and $v_1,\dots,v_m$ is a basis of $V$ there is $w\in V$ such that
$$
h(w,v_1)=a_1/2\text{ and  }h(w,v_i)=a_i-a_1/2\text{ if }1<i\leq m.
$$
Since $a_1,\dots,a_m\in\i$, we see that $h(w,V)\subseteq \i$. By
means of the orthogonal basis $u_1,\dots,u_m$ we see that $w\in
V\i$. Moreover, a direct calculation, using the choice of~$w$,
shows that
$$
h(v_1+w,v_i+w)\equiv h(u_1,u_i)\mod \i^*\,\i,\quad 1\leq i\leq m.
$$
Since $\i$ is $*$-invariant, these congruences hold modulo $\i^2$, which is a nilpotent ideal with nilpotency degree less than
that of $\i$. Moreover, since $w\in  V\i\subseteq V\r $, we have
$$
h(v_i+w,v_j+w)\equiv h(v_i,v_j)\equiv h(u_i,u_j)\mod \r.
$$
The inductive hypothesis ensures the existence of a vector $w'\in V\i^2 $ such that
$$
h(v_1+w+w',v_i+w+w')= h(u_1,u_i),\quad 1\leq i\leq m.
$$
Since $\i^2$ is included in $\i$, the vector $w+w'$ is in $V\i$ and satisfies the stated conditions.\qed

\medskip

Let $\i$ be a $*$-invariant ideal of $A$ and let $\overline{A},\overline{V}$ and $\overline{h}$ be defined as in \S\ref{sec:class}.
Note that $\overline{h}$ is of the same kind as $h$, in the sense defined in \S\ref{sec:class}.
Let $\overline{U}$ be the unitary group associated to $(\overline{V},\overline{h})$.
We have a canonical group epimorphism $U\to \overline{U}$, given by $g\mapsto \overline{g}$, where $\overline{g}(v+V\i)=g(v)+V\i$.

\begin{thm}
\label{tow}
 Suppose $\i$ is a $*$-invariant nilpotent ideal of $A$. Then the canonical homomorphism $U\to \overline{U}$ is surjective.
\end{thm}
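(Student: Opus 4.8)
The plan is to lift an arbitrary $\overline{g}\in\overline{U}$ to $U$ by induction on the rank $m$, with Proposition \ref{33} as the essential engine. I would fix once and for all an orthogonal basis $u_1,\dots,u_m$ of $V$, which exists by Lemma \ref{orz}. An element $g\in U$ is completely determined by the images $w_i:=g(u_i)$, and conversely a tuple $w_1,\dots,w_m$ arises from some $g\in U$ precisely when it is an orthogonal basis of $V$ with the same Gram matrix as $u_1,\dots,u_m$: having the invertible Gram matrix of $h$ forces linear independence by Lemma \ref{nueva}, hence a basis of the rank-$m$ free module. Thus it suffices to produce an orthogonal basis $w_1,\dots,w_m$ of $V$ satisfying $h(w_i,w_j)=h(u_i,u_j)$ for all $i,j$ together with $w_i+\i V=\overline{g}(u_i+\i V)$; the map $u_i\mapsto w_i$ will then be the required preimage of $\overline{g}$.

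First I would construct $w_1$. Choose arbitrary lifts $v_i\in V$ of $\overline{g}(u_i+\i V)$. Since $\overline{g}$ is an isometry of $\overline{h}$, we have $h(v_i,v_j)\equiv h(u_i,u_j)$ modulo $\i$, and a fortiori modulo $\r$; hence both hypotheses of Proposition \ref{33} are met. Applying it yields $w\in\i V$ such that, writing $w_1=v_1+w$ and $v_i'=v_i+w$ for $i>1$, one has $h(w_1,w_1)=h(u_1,u_1)$ and $h(w_1,v_i')=0$. Because $w\in\i V$, translation by $w$ changes nothing modulo $\i V$, so $w_1+\i V=\overline{g}(u_1+\i V)$ and $v_i'+\i V=\overline{g}(u_i+\i V)$; likewise $h(v_i',v_j')\equiv h(u_i,u_j)$ modulo $\i$ for all $i,j>1$.

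Next I would descend to the orthogonal complement. As $h(w_1,w_1)$ is a unit, Lemma \ref{orz2} gives $V=w_1 A\perp V_1$ with $V_1=w_1^{\perp}$ free of rank $m-1$ and $h|_{V_1}$ non-degenerate; moreover $v_2',\dots,v_m'\in V_1$. To invoke the inductive hypothesis I need a fixed orthogonal basis of $V_1$ of the correct lengths: Theorem \ref{crux} supplies $\phi\in U$ with $\phi(u_1)=w_1$, whence $u_i^{(1)}:=\phi(u_i)$, $i>1$, is an orthogonal basis of $V_1=\phi(u_1)^{\perp}$ with $h(u_i^{(1)},u_i^{(1)})=h(u_i,u_i)$. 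Now $\overline{u_i^{(1)}}$ and $\overline{v_i'}$ are two orthogonal bases of $\overline{V_1}=V_1/\i V_1$ sharing the same Gram matrix, so $\overline{u_i^{(1)}}\mapsto\overline{v_i'}$ defines an isometry $\overline{g_1}$ of $\overline{V_1}$. Since $\i V\cap V_1=\i V_1$, the module $\overline{V_1}$ is exactly the reduction of $V_1$, and the theorem for rank $m-1$ lifts $\overline{g_1}$ to $g_1\in U(V_1)$. Setting $w_i=g_1(u_i^{(1)})$ for $i>1$ gives orthogonal $w_2,\dots,w_m\in V_1$ with lengths $h(u_i,u_i)$ and $w_i+\i V=\overline{g}(u_i+\i V)$.

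Then $w_1,\dots,w_m$ is the sought orthogonal basis, and $g:u_i\mapsto w_i$ is a preimage of $\overline{g}$; the base case $m=1$ is already finished by the construction of $w_1$ alone. I expect the genuine difficulty to be concentrated in Proposition \ref{33}, namely the passage from congruences modulo $\i$ to an exact equality by a correction lying inside $\i V$; granting that, the remaining work here is bookkeeping. The two points requiring care are that translating by an element of $\i V$ leaves both the reduction modulo $\i V$ and the Gram matrix modulo $\i$ undisturbed, and that $w_1^{\perp}$ can be identified, via Theorem \ref{crux}, with a hermitian module carrying an orthogonal basis of the same lengths as $u_2,\dots,u_m$, so that the inductive hypothesis is applied to a bona fide target isometry of $\overline{V_1}$.
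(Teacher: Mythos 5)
Your proof is correct and is essentially the paper's own argument: induction on $m$, with Proposition \ref{33} used to repair the lifted image of $u_1$, Theorem \ref{crux} used to carry $u_1$ to the corrected vector $w_1$ and thereby transport an orthogonal basis onto $w_1^{\perp}$, and the inductive hypothesis applied to the rank-$(m-1)$ complement; the paper differs only cosmetically, in that it treats $m=1$ separately by lifting norms along $1+\i\cap R$, and phrases the inductive step as lifting the composite isometry $\overline{k_0}^{-1}f_0$ of $\overline{u_1^{\perp}}$ rather than transporting bases. One caution: ``linearly independent, hence a basis'' is not a valid inference over a local ring (consider $\pi e_1, e_2,\dots,e_m$ in $\mathcal{O}^m$), so the fact that $w_1,\dots,w_m$ form a basis should instead be justified either by Lemma \ref{orz2}(a), since they are orthogonal of unit length, or by Nakayama's lemma from the invertibility of their Gram matrix modulo $\r$, which is exactly how the paper argues in the proof of Proposition \ref{33}.
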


\noindent{\it Proof.} By induction on $m$. Suppose first $m=1$. In this case the unitary group $\overline{U}$ coincides with the 1-norm
group of $\overline{A}$, so it consists of all $z+\i$ such that $z\in A$ and $zz^*\equiv 1\mod \i$. Now
$zz^*=1+s$ with $s\in\i\cap R$. Since $\i\cap R$ is nilpotent there is $r\in\i\cap R$ such that $1+s=(1+r)^2=(1+r)(1+r)^*$.
Now $z(1+r)^{-1}$ has norm 1, so $z=z_1(1+r)$ for some $z_1\in N$. Since $N=U$ and $z+\i=z_1(1+r)+\i=z_1+\i$, this case is established.

Suppose $m>1$ and the result is true for $m-1$. Let $f\in \overline{U}$. We wish to find $g\in U$ such that $\overline{g}=f$.
For this purpose, let $u_1,\dots,u_m$ be an orthogonal basis of $V$. Then $f(u_i+V\i)=v_i+V\i$ for some $v_1,\dots,v_m\in V$
satisfying $h(v_i,v_j)\equiv h(u_i,u_j)\mod \i$. By Proposition \ref{33} there is $w\in V$ such that
$$
h(v_1+w,v_1+w)=h(u_1,u_1)\text{ and }h(v_1+w,v_i+w)=0\text{ if }1<i\leq m.
$$
By Theorem \ref{crux} there is $k\in U$ such that $ku_1=v_1+w$.
Note that $k$ maps $V_0=u_1^\perp=\langle u_2,\dots,u_m\rangle$
into $V_1=(v_1+w)^\perp=\langle v_2+w,\dots,v_m+w\rangle$ and its
restriction, say $k_0$, is an equivalence between these
non-degenerate hermitian spaces. Moreover, we have the equivalence
$f_0:\overline{V_0}\to \overline{V_1}$, given by
$f_0(u_i+V\i)=v_i+V\i$. Therefore $\overline{k_0}^{-1}f_0$ is in
the unitary group of $\overline{V_0}$. By the inductive hypothesis
there is $g_1\in U'$, the unitary group of $V_0$, such that
$\overline{k_0}^{-1}f_0=\overline{g_1}$. Therefore
$f_0=\overline{k_0g_1}$, where $k_0g_1:V_0\to V_1$ is an
equivalence. If we now let $g=k\oplus k_0g_1$ then $g\in U$ and
$\overline{g}=f$.\qed

\begin{lem}
\label{cuad}
 Suppose $\i$ is a $*$-invariant ideal of $A$ satisfying $\i^2=0$. Let $\{v_1,\dots,v_m\}$ be a basis of $V$
and let $X$ be the Gram matrix of $h$ relative to
$\{v_1,\dots,v_m\}$. Then, relative to $\{v_1,\dots,v_m\}$, the
kernel of the canonical epimorphism $U\to \overline{U}$ consists
of all matrices $1+M$, such that $M\in M_m(\i)$ and
\begin{equation}
\label{anr}
{M^*}'X+XM=0,
\end{equation}
where ${M^*}'$ denotes the transpose of $M^*$.
\end{lem}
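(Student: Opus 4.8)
The plan is to translate the condition ``$g\in U$'' into a matrix equation and then linearize it using $\i^2=0$. Fixing the basis $\{v_1,\dots,v_m\}$, I would first record the coordinate description of $h$: if $u,w\in V$ have coordinate columns $a,b\in A^m$, then expanding $h(u,w)=h(\sum_i v_ia_i,\sum_j v_jb_j)=\sum_{i,j}a_i^*h(v_i,v_j)b_j$ gives $h(u,w)={a^*}'Xb$, where ${a^*}'$ is the row vector with entries $a_i^*$. The point demanding care is the interaction of the anti-automorphism $*$ with the (possibly non-commutative) matrix product: since $(xy)^*=y^*x^*$, the coordinate vector of $gu$ being $Ga$ forces $((Ga)^*)'={a^*}'{G^*}'$ with the factors in exactly that order. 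Granting this, $g\in U$ becomes ${a^*}'{G^*}'XGb={a^*}'Xb$ for all $a,b$, hence the single matrix identity ${G^*}'XG=X$.

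Next I would identify the kernel. Since $\overline g$ acts on $\overline V=V/\i V$ by the reduction of $G$ modulo $\i$, we have $\overline g=\mathrm{id}$ precisely when $G\equiv 1\bmod\i$; writing $G=1+M$, this says exactly that $M\in M_m(\i)$. Thus the kernel, read off in the fixed basis, is the set of $1+M$ with $M\in M_m(\i)$ and ${G^*}'XG=X$, and it remains to simplify the unitary condition for such $G$.

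The decisive step is the linearization. Substituting $G=1+M$ and using ${(1+M)^*}'=1+{M^*}'$, I expand ${G^*}'XG=X+{M^*}'X+XM+{M^*}'XM$. Because $\i$ is $*$-invariant, ${M^*}'$ also has entries in $\i$; hence each entry of ${M^*}'XM$ lies in $\i A\i\subseteq\i^2=0$, so that term drops out. What survives is exactly ${M^*}'X+XM=0$, which is (\ref{anr}). Conversely, any $M\in M_m(\i)$ satisfying (\ref{anr}) yields ${G^*}'XG=X$ by the same expansion, and $1+M$ is invertible since $M^2=0$ (its entries lie in $\i^2=0$), with inverse $1-M$; so $1+M\in U$ and lies in the kernel.

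The main obstacle I anticipate is not the linearization, which is immediate once $\i^2=0$ is invoked, but the bookkeeping in the first step: pinning down the ``conjugate-transpose'' convention ${G^*}'$ so that the order of factors in ${a^*}'{G^*}'XGb$ is forced by the anti-automorphism property rather than assumed, and checking that the hermitian symmetry $h(v_i,v_j)=h(v_j,v_i)^*$ is consistent with ${X^*}'=X$. Once these conventions are fixed unambiguously, the remainder is a short computation.
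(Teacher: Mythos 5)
Your proof is correct and follows essentially the same route as the paper: identify the kernel as matrices $1+M$ with $M\in M_m(\i)$ satisfying ${(1+M)^*}'X(1+M)=X$, then expand and kill the quadratic term ${M^*}'XM$ using $\i^2=0$. The extra bookkeeping you supply (the coordinate formula $h(u,w)={a^*}'Xb$, the order of factors forced by the anti-automorphism, and the invertibility of $1+M$) is exactly what the paper leaves implicit.
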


\noindent{\it Proof.} By definition the kernel of $U\to \overline{U}$ consists of all matrices of the form $1+M$,
where $M\in M_m(\i)$ and
$$
{(1+M)^*}'X(1+M)=X.
$$
Expanding this equation and using $\i^2=0$ yields (\ref{anr}).\qed

For the remainder of this paper we assume that $A$ is a finite ring. In particular, its maximal ideal $\r$ is nilpotent,
and we denote by $e\geq 1$ the nilpotency degree of $\r$.

\begin{cor}
\label{cuad2} The kernel of  $U\to \overline{U}$ has order
$\i^{m(m-1)/2}|\k|^m$, where $\k$ is the group of all $a\in\i$
such that $a+a^*=0$.
\end{cor}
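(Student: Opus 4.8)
The plan is to apply Lemma~\ref{cuad} and then count the solutions of the resulting matrix equation by means of a change of variables that converts it into a skew-hermitian condition.

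First I would fix a basis $\{v_1,\dots,v_m\}$ of $V$ and let $X$ be the corresponding Gram matrix, so that by Lemma~\ref{cuad} the kernel is in bijection with the set of $M\in M_m(\i)$ satisfying ${M^*}'X+XM=0$. Two observations are needed about $X$. Since $h$ is non-degenerate and $A$ is finite, $X$ is invertible in $M_m(A)$; and since $X_{ji}=h(v_j,v_i)=h(v_i,v_j)^*=X_{ij}^*$, the matrix $X$ is hermitian, i.e. ${X^*}'=X$. Here I would also record the anti-automorphism identity ${(PQ)^*}'={Q^*}'\,{P^*}'$ for matrices over $A$, which follows directly from $*$ being an anti-automorphism.

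Next I would substitute $H=XM$. Because $X,X^{-1}\in M_m(A)$ and $\i$ is a two-sided ideal, $M\mapsto XM$ is a bijection of $M_m(\i)$ onto itself. Using ${X^*}'=X$ one computes ${M^*}'={(X^{-1}H)^*}'={H^*}'{(X^{-1})^*}'={H^*}'X^{-1}$, whence ${M^*}'X+XM={H^*}'+H$. Thus the kernel is in bijection with the skew-hermitian matrices $H\in M_m(\i)$, those satisfying ${H^*}'=-H$, equivalently $H_{ji}=-H_{ij}^*$ for all $i,j$.

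Finally I would count these matrices entry by entry. Each diagonal entry must satisfy $H_{ii}+H_{ii}^*=0$, so it ranges freely over $\k$, contributing the factor $|\k|^m$. For each of the $m(m-1)/2$ pairs $i<j$ the entry $H_{ij}$ ranges freely over $\i$ and determines $H_{ji}=-H_{ij}^*$, contributing the factor $|\i|^{m(m-1)/2}$. Multiplying yields the asserted order $|\i|^{m(m-1)/2}|\k|^m$. The computation itself is routine; the only delicate point, and the one I would be most careful about, is the non-commutativity of $A$: the substitution $H=XM$ must be manipulated using ${(PQ)^*}'={Q^*}'{P^*}'$ with the factors in the correct order, and it is precisely the hermitian symmetry ${X^*}'=X$ that makes the two middle copies of $X$ cancel.
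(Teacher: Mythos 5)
Your proof is correct, and it follows the paper's overall strategy --- reduce to Lemma \ref{cuad} and count the solutions $M\in M_m(\i)$ of ${M^*}'X+XM=0$ --- but with a different normalization of that equation. The paper chooses the basis so that $X$ is diagonal with unit entries (available by Theorem \ref{ort}), which are central and fixed by $*$; the equation then decouples entrywise immediately, forcing $M_{ii}\in\k$ and determining $M_{ji}$ from $M_{ij}$ for $i<j$. You instead keep an arbitrary basis and absorb $X$ via the substitution $H=XM$, which is a bijection of $M_m(\i)$ onto itself because $X\in\GL_m(A)$ and $\i$ is a two-sided ideal; the hermitian symmetry ${X^*}'=X$ then converts the condition into the skew-hermitian condition ${H^*}'+H=0$, and the same entry-by-entry count yields $|\i|^{m(m-1)/2}|\k|^m$. (Note that writing $H_{ji}=-H_{ij}^*$ with $H_{ji}\in\i$ uses the $*$-invariance of $\i$, which is indeed a hypothesis of Lemma \ref{cuad}.) Your route has the minor advantages of not invoking the existence of an orthogonal basis and of making the basis-independence of the count transparent; one small simplification is that finiteness of $A$ is not needed for the invertibility of $X$ --- non-degeneracy of $h$ alone already makes the Gram matrix invertible. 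Both arguments are routine once Lemma \ref{cuad} is in hand and arrive at the same answer.
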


\noindent{\it Proof.} We may choose $\{v_1,\dots,v_m\}$ so that $X$ is diagonal with unit entries. A direct calculation based on (\ref{anr}) now yields
the desired result.\qed

\begin{thm}
\label{zxz} Let $A$ be a local ring, not necessarily commutative, with Jacobson radical~$\r$. Suppose $A$ is finite and has
an involution $*$ whose set of fixed points, say $R$, lies in the center of $A$. Let $\m$ be the Jacobson
radical of the finite local commutative ring $R$ and suppose that the residue field $R/\m\cong F_q$ has odd characteristic.
Let $\s$ be the kernel of the trace map $\r\to\m$.

Let $V$ a free right $A$-module of rank $m\geq 1$ equipped with a non-degenerate $*$-hermitian form~$h$. Let $U_m(A)$ and $U_m(\overline{A})$
be the unitary groups associated to $(V,h)$ and its reduction $(\overline{V},\overline{h})$ modulo $\r$, as defined in \S\ref{sec:class}. Then
$$
|U_m(A)|=|\r|^{m(m-1)/2}|\s|^m |U_m(\overline{A})|=|\r|^{m(m+1)/2} |U_m(\overline{A})|/|\m|^m.
$$
\end{thm}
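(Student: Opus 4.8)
The plan is to prove the last displayed equality first, as it amounts to the single assertion $|\s|=|\r|/|\m|$, and then to establish the genuine counting formula (the first equality) by induction on the nilpotency degree $e$ of $\r$.

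The identity $|\s|=|\r|/|\m|$ will follow once I show that the trace map $T\colon\r\to\m$, $a\mapsto a+a^*$, is surjective, for then $\s=\ker T$ has index $|\m|$ in $\r$. Surjectivity is immediate from the element $d\in A$ with $d+d^*=1$: given $t\in\m$, the element $dt$ lies in $\r$ and, since $t$ is central and fixed, $dt+(dt)^*=t(d+d^*)=t$. The same computation shows that $T$ maps the $*$-invariant ideal $\i=\r^{e-1}$ onto $\m\cap\i$, a fact needed below. Granting $|\s|=|\r|/|\m|$, the equality $|\r|^{m(m-1)/2}|\s|^m=|\r|^{m(m+1)/2}/|\m|^m$ is pure arithmetic, so the two right-hand sides of the theorem agree.

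For the main formula I would induct on $e$. If $e=1$ then $\r=0$, whence $A=\overline A$, $\s=0$, and the statement is trivial. If $e\ge 2$, set $\i=\r^{e-1}$, a $*$-invariant ideal with $\i^2=\r^{2e-2}=0$. By Theorem \ref{tow} the canonical map $U_m(A)\to U_m(A/\i)$ is surjective, and by Corollary \ref{cuad2} its kernel has order $|\i|^{m(m-1)/2}|\k|^m$, where $\k=\{a\in\i:a+a^*=0\}$; thus $|U_m(A)|=|\i|^{m(m-1)/2}|\k|^m\,|U_m(A')|$ with $A'=A/\i$. One checks that $A'$ inherits all the standing hypotheses: it is finite and local, its fixed ring is $R'=R/(\m\cap\i)$ with radical $\m'=\m/(\m\cap\i)$ and residue field $F_q$, and its radical $\r'=\r/\i$ has nilpotency degree $e-1$ with $A'/\r'\cong A/\r=\overline A$. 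Hence the inductive hypothesis yields $|U_m(A')|=|\r'|^{m(m-1)/2}|\s'|^m\,|U_m(\overline A)|$.

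It remains to reassemble the factors. Clearly $|\r|=|\i|\,|\r'|$, and I claim $|\s|=|\k|\,|\s'|$. This comes from the short exact sequence $1\to\k\to\s\to\s'\to 1$: reduction modulo $\i$ maps $\s$ into $\s'$ with kernel $\s\cap\i=\k$, and it is onto because any $a\in\r$ with $a+a^*\in\m\cap\i$ can be replaced by $a_0=a-d(a+a^*)\in a+\i$, which satisfies $a_0+a_0^*=(a+a^*)(1-d-d^*)=0$. Substituting and collecting exponents gives $|U_m(A)|=|\r|^{m(m-1)/2}|\s|^m\,|U_m(\overline A)|$, closing the induction. The steps that require actual thought, rather than bookkeeping, are the three applications of the averaging element $d$---surjectivity of $T$ on $\r$ and on $\i$, and the splitting $\s\twoheadrightarrow\s'$---together with the verification that $A/\i$ genuinely satisfies the standing hypotheses so that the inductive hypothesis applies; this last point, namely the correct identification of the fixed ring $R'$ and its radical $\m'$ inside the quotient, is the main obstacle.
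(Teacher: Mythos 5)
Your proof is correct and takes essentially the same approach as the paper's: both work down the filtration by powers of $\r$, applying Theorem \ref{tow} and Corollary \ref{cuad2} at each step so that the kernel orders multiply out, with the trace-zero counts telescoping to $|\s|$ and the second equality coming from surjectivity of the trace map $\r\to\m$. The only difference is packaging---you phrase it as induction on the nilpotency degree with the exact sequence $1\to\k\to\s\to\s'\to 1$ (correcting by $a\mapsto a-d(a+a^*)$), whereas the paper runs the chain $A/\r^e,\dots,A/\r$ in one pass and identifies the trace-zero part of each kernel as $\s\cap\r^{k-1}/\s\cap\r^k$ via the decomposition $a=(a-a^*)/2+(a+a^*)/2$.
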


\noindent{\it Proof.} Let $e\geq 1$ be the nilpotency degree of $\r$ and consider the rings
$$
A=A/\r^{e}, A/\r^{e-1},\dots,A/\r^2,A/\r.
$$
Each of them is a factor of $A$, so is local and inherits an
involution from $*$. Each successive pair is of the form
$C=A/\r^k,D=A/\r^{k-1}$,  where the kernel of the canonical
epimorphism $C\to D$ is $\j=\r^{k-1}/\r^k$, so that $\j^2=0$. We
may thus apply Theorem \ref{tow} and Corollary \ref{cuad2} $e-1$
times to obtain the desired result, as follows. First of all,
$$
|\r|=|\r^{e-1}/\r^e|\cdots |\r/\r^2|.
$$
Secondly, $\s$ is the group of all $a\in\r$ satisfying $a+a^*=0$, with
$$
|\s|=|\s\cap\r^{e-1}/\s\cap \r^e|\cdots |\s\cap\r^{k-1}/\s\cap \r^{k}|\cdots |\s\cap \r/\s\cap \r^2|.
$$
Here the group of elements of trace 0 in the kernel of $C\to D$
has $|\s\cap\r^{k-1}/\s\cap \r^{k}|$ elements. Indeed, these
elements are those $a+\r^k$ such that $a\in \r^{k-1}$ and
$a+a^*\in \r^k$. But $a-a^*$ is an element of trace 0, so
$a-a^*\in\s\cap\r^{k-1}$. Thus $$a=(a-a^*)/2+(a+a^*)/2\in
\s\cap\r^{k-1}+\r^k.$$ Hence the group of elements of trace 0 in
the kernel of $C\to D$ is
$$
(\s\cap\r^{k-1}+\r^k)/\r^k\cong \s\cap\r^{k-1}/(\s\cap\r^{k-1}\cap \r^k)\cong \s\cap\r^{k-1}/\s\cap\r^{k}.
$$
Finally, since $2\in R^*$ the trace map $\r\to\m$ is surjective,
so its kernel is $\s\cong \r/\m$.\qed

\begin{cor}
\label{pom}
 Let $K,F,\mathcal{O},\mathcal{R},\mathfrak{p},\mathcal{P}, q$ be as in the Introduction and let $e\geq 1$.

(a) Let $A=\mathcal{O}/\mathfrak{p}^e$, with identity involution. Then
$$
|O_m(A)|=q^{m(m-1)(e-1)/2}|O_m(q)|,
$$
where $O_m(q)$ is the orthogonal group of rank $m$ over $F_q$ associated to the reduction $\overline{h}$ of $h$ modulo~$\r$.

(b) Let $A=\mathcal{R}/\mathcal{P}^e$ with involution inherited from $\mathcal{R}$. If $F/K$ is unramified then
$$
|U_m(A)|=q^{m^2(e-1)}|U_m(q^2)|.
$$
If $F/K$ is ramified and $e$ is even then
$$
|U_m(A)|=q^{(m^2(e-1)+m)/2}|O_m(q)|
$$
and if $F/K$ is ramified and $e$ is odd then
$$
|U_m(A)|=q^{m^2(e-1)/2}|O_m(q)|.
$$
Here $O_m(q)$ is the orthogonal group of rank $m$ over $F_q$ associated to the reduction $\overline{h}$ of $h$ modulo~$\r$.
\end{cor}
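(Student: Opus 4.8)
The plan is to read off each formula directly from Theorem \ref{zxz}, whose conclusion $|U_m(A)|=|\r|^{m(m-1)/2}|\s|^m|U_m(\overline A)|$ reduces everything to evaluating the three quantities $|\r|$, $|\s|$ and $|U_m(\overline A)|$ in each listed situation. First I would check that every ring $A$ occurring in (a) and (b) satisfies the hypotheses of that theorem and the standing assumptions of \S\ref{sec:class}: $A$ is finite, commutative (so the fixed ring $R$ is automatically central), local with residue field a finite field of odd characteristic, and $1+\m$ is a $p$-group with $p$ odd, whence its squaring map is an automorphism. With this in place Theorem \ref{zxz} applies verbatim.

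The second step is to identify $\overline A=A/\r$ together with its induced involution, since this fixes the factor $|U_m(\overline A)|$. In case (a) the involution is the identity and $\overline A\cong F_q$, so $U_m(\overline A)=O_m(q)$. In the unramified case of (b), $\overline A\cong F_{q^2}$ and the induced involution sends the reduction of $\tau$ to its negative; as $\bar\tau\neq 0$ and the characteristic is odd this is nontrivial, hence the Frobenius, so $U_m(\overline A)=U_m(q^2)$. In the ramified case of (b), $\rho\in\mathcal P$ reduces to $0$, so the involution is trivial on $\overline A\cong F_q$ and again $U_m(\overline A)=O_m(q)$.

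The third step is the arithmetic of $|\r|$ and $|\s|$. Everywhere $|\r|=|A|/|\overline A|$, giving $q^{e-1}$ in (a), $q^{2(e-1)}$ in the unramified case, and $q^{e-1}$ in the ramified case. For $|\s|$ I would invoke the fact, established at the end of the proof of Theorem \ref{zxz}, that since $2\in R^*$ the trace map $\r\to\m$ is surjective with kernel $\s$, so $|\s|=|\r|/|\m|$. In (a) the identity involution forces $\s=\{a\in\r:2a=0\}=0$, hence $|\s|=1$; in the unramified case $R=\mathcal O/\mathfrak p^{e}$ gives $|\m|=q^{e-1}$ and therefore $|\s|=q^{e-1}$.

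The one genuinely delicate computation, and the source of the split between the even and odd formulas, is $|\s|$ in the ramified case. Here I would describe the skew elements directly: writing a general element of $\mathcal R$ as $a+b\rho$ with $a,b\in\mathcal O$ and using $\rho^*=-\rho$, the element is skew in $A$ exactly when $2a\in\mathcal P^{e}\cap\mathcal O$, i.e. (as $2$ is a unit) when the image of $a$ vanishes, so $\s=\{b\rho+\mathcal P^{e}:b\in\mathcal O\}$. Because $\rho^2=\pi$, an element $b\rho$ with $b=\pi^{k}u$ ($u$ a unit) equals $u\rho^{2k+1}$, which lies in $\mathcal P^{e}=(\rho^{e})$ precisely when $2k+1\geq e$, i.e. $k\geq\lfloor e/2\rfloor$. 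Hence $\s\cong\mathcal O/\mathfrak p^{\lfloor e/2\rfloor}$ and $|\s|=q^{\lfloor e/2\rfloor}$, equal to $q^{e/2}$ for $e$ even and $q^{(e-1)/2}$ for $e$ odd. Substituting these values of $|\r|$, $|\s|$ and $|U_m(\overline A)|$ into the formula of Theorem \ref{zxz} and simplifying the exponents then yields all four stated identities; the only nonroutine point in the whole argument is tracking the offset between the fixed part $\mathcal O$ and the skew part $\mathcal O\rho$ in the $\rho$-adic filtration, which is exactly what makes the answer depend on the parity of $e$.
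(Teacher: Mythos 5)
Your proof is correct and follows essentially the same route as the paper: both reduce immediately to Theorem \ref{zxz} and then compute the local data of $\mathcal{R}/\mathcal{P}^e$ (and of $\mathcal{O}/\mathfrak{p}^e$) in each case. The only cosmetic difference is that in the ramified case you evaluate $|\s|$ directly as the skew part $\mathcal{O}\rho \bmod \mathcal{P}^e$, whereas the paper uses the equivalent form of the formula involving $|\m|$, obtained by identifying the fixed ring $R\cong\mathcal{O}/\mathfrak{p}^{e/2}$ (for $e$ even) or $\mathcal{O}/\mathfrak{p}^{(e+1)/2}$ (for $e$ odd); the two bookkeepings agree since $|\s|=|\r|/|\m|$.
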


\noindent{\it Proof.} (a) Use $\r=\m$ and Theorem \ref{zxz}.

(b) In the unramified case $R\cong \mathcal{O}/\mathfrak{p}^e$ and $\mathcal{R}/\mathcal{P}\cong F_{q^2}$. Thus
$|\r|=q^{2(e-1)}$ and $|\m|=q^{e-1}$. In the ramified case $\mathcal{R}/\mathcal{P}\cong F_{q}$ so $|\r|=q^{e-1}$. Moreover, if $e$ even then $R\cong \mathcal{O}/\mathfrak{p}^{e/2}$
so that $|\m|=q^{(e-2)/2}$, and if $e$ is odd then $R\cong \mathcal{O}/\mathfrak{p}^{(e+1)/2}$ so that $|\m|=q^{(e-1)/2}$. Now apply Theorem ~\ref{zxz}.\qed

\smallskip

For the purpose of calculating the character degrees mentioned in the Introduction, we isolate the following case.

\begin{cor}
\label{pom2} Let $K,F,\mathcal{O},\mathcal{R},\mathfrak{p},\mathcal{P}, q$ be as in the Introduction and let $\ell\geq 1$.
Suppose $F/K$ is ramified. Set $A=\mathcal{R}/\mathcal{P}^{2\ell}$ and $B=A/\r^\ell$, both with involutions inherited from $\mathcal{R}$.
If $\ell$ is even then
$$
|U_m(B)|=q^{(m^2(\ell-1)+m)/2}|O_m(q)|
$$
and if $\ell$ is odd then
$$
|U_m(B)|=q^{m^2(\ell-1)/2}|O_m(q)|.
$$
Here $O_m(q)$ is the orthogonal group of rank $m$ over $F_q$ associated to the reduction $\overline{h}$ of $h$ modulo~$\r$.
\end{cor}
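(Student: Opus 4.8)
The plan is to recognize that Corollary~\ref{pom2} is nothing more than the ramified case of Corollary~\ref{pom}(b) once the ring $B$ has been identified. First I would compute $B$ explicitly. Since $A=\mathcal{R}/\mathcal{P}^{2\ell}$, its maximal ideal is $\r=\mathcal{P}/\mathcal{P}^{2\ell}$, so that $\r^\ell=\mathcal{P}^\ell/\mathcal{P}^{2\ell}$. By the third isomorphism theorem,
$$
B=A/\r^\ell=(\mathcal{R}/\mathcal{P}^{2\ell})/(\mathcal{P}^\ell/\mathcal{P}^{2\ell})\cong \mathcal{R}/\mathcal{P}^\ell.
$$
The involution on $B$ is inherited from that of $A$, which in turn is inherited from $\mathcal{R}$, so this isomorphism respects the involutions. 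Thus $B$ is precisely a ring of the form treated in Corollary~\ref{pom}(b), with the exponent $e$ there replaced by $\ell$.

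Next, since $F/K$ is assumed ramified, I would apply the ramified part of Corollary~\ref{pom}(b) to the free $B$-module $V$ of rank $m$ equipped with $h$, taking $e=\ell$. That corollary immediately yields $|U_m(B)|=q^{(m^2(\ell-1)+m)/2}|O_m(q)|$ when $\ell$ is even and $|U_m(B)|=q^{m^2(\ell-1)/2}|O_m(q)|$ when $\ell$ is odd, which is exactly the claimed formula. The orthogonal group $O_m(q)$ appearing here is the one attached to the reduction $\overline{h}$ of $h$ modulo $\r$; since the residue field of $B$ in the ramified case is $F_q$, this is the same $O_m(q)$ as in Corollary~\ref{pom}(b), so there is no ambiguity in its meaning.

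There is no genuine obstacle in this argument: the whole content is the observation that $A/\r^\ell\cong\mathcal{R}/\mathcal{P}^\ell$ together with a direct appeal to the order formula already established. The only point requiring a moment's care is to confirm that the maximal ideal of $A$ really is $\mathcal{P}/\mathcal{P}^{2\ell}$, and hence that its $\ell$-th power equals $\mathcal{P}^\ell/\mathcal{P}^{2\ell}$; this is immediate from the chain of ideals of $\mathcal{R}$ recorded in the Introduction. Everything else is a verbatim transcription of Corollary~\ref{pom}(b) under the substitution $e\mapsto\ell$.
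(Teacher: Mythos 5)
Your proposal is correct and is essentially identical to the paper's own proof: the paper likewise observes that $B\cong \mathcal{R}/\mathcal{P}^{\ell}$ as a ring with involution and then applies Corollary~\ref{pom} with $e=\ell$. Your explicit verification via the third isomorphism theorem that $\r^\ell=\mathcal{P}^\ell/\mathcal{P}^{2\ell}$ and that the isomorphism respects the involutions simply spells out the identification the paper states in one line.
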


\noindent{\it Proof.} Since $B\cong \mathcal{R}/\mathcal{P}^{\ell}$ as a ring with involution, we may apply Corollary \ref{pom} with $e=\ell$.\qed

\section{Unitary groups of rank 2: an alternative approach}
\label{sec:rank2}

Here we compute the order of $U_2(A)$ in a different and direct way, without resorting to the field case.

Given $s\in R$, let $d(s)$ be the number
of primitive vectors $v\in V$ such that
$$
h(v,v)=s.
$$

Suppose $u\in V$ is a primitive of length $h(u,u)=s$.
Then, by Theorem \ref{crux}, $d(s)$ is the number of $U$-conjugates to $v\in V$, i.e., $d(s)$
is the index of the stabilizer $S_u$ of $u$ in $U$. Note that when $h(u,u)\in R^*$ then $S_u$ is the unitary
group of a hermitian space of rank $m-1$.

By means of an orthogonal basis $v_1,\dots,v_m$  we see that
$d(s)$ is the number of solutions to
$$
\delta_1x_1^*x_1+\cdots+\delta_m x_{m}^*x_{m}=s,
$$
where $\delta_i=h(v_i,v_i)\in R^*$, all $x_i\in A$, and at least one $x_i$ is a unit.

\begin{lem}
\label{nuni} Let $m=2$ and suppose $s\in\m$. Then $d(s)=0$ if $h$ is not isotropic and
$$
d(s)=|A^*|\times |N|.
$$
if $h$ is isotropic.
\end{lem}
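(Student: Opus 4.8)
The plan is to treat the two cases of the statement separately, starting with the easier non-isotropic case. When $h$ is non-isotropic and $m=2$, I would first note that $Q$ cannot be surjective: by Corollary \ref{sant} a surjective $Q$ makes all non-degenerate forms equivalent, and the isotropic form of type $\{1,-1\}$ is among them, forcing $h$ to be isotropic. Hence $Q$ is not surjective, so by Proposition \ref{45} the involution induced on $A/\r$ is the identity and $h$ is of type $\{1,-\epsilon\}$. Lemma \ref{tiso}(b) then gives $\Lambda=R^*$, so no value in $\m$ is attained and $d(s)=0$, as claimed.

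For the isotropic case I would pass to a hyperbolic basis. Since $h$ is isotropic there is a primitive $v$ with $h(v,v)=0$; using non-degeneracy to find $u$ with $h(v,u)=1$ and the element $d$ (with $d+d^*=1$) to correct $u$ by a multiple of $v$, one produces a pair $\{e,f\}$ with Gram matrix $\begin{pmatrix}0&1\\1&0\end{pmatrix}$, which is a basis of $V$ by Lemma \ref{nueva}. Writing a general vector as $ex+fy$, its length becomes $h(ex+fy,ex+fy)=x^*y+y^*x=\tr(x^*y)$, and primitivity means at least one of $x,y$ is a unit. Thus $d(s)$ is the number of pairs $(x,y)$ with $\tr(x^*y)=s$ for which $x$ or $y$ is a unit.

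I would then count these pairs by fibering over $y$, splitting into the disjoint sets $\{y\in A^*\}$ and $\{y\in\r,\ x\in A^*\}$. Here $2\in R^*$, so the trace map $A\to R$ is surjective with kernel $\s:=\{a\in A: a+a^*=0\}$; for fixed $y\in A^*$ the substitution $u=x^*y$ is a bijection of $A$ with $\tr(x^*y)=\tr(u)$, so each unit $y$ contributes $|\s|$ choices of $x$, giving $|A^*|\,|\s|$ from the first set. Likewise the trace map $\r\to\m$ is surjective with kernel $\s\cap\r$, so for fixed $x\in A^*$ the same substitution (restricted to $\r$) shows each unit $x$ contributes $|\s\cap\r|$ choices of $y\in\r$, giving $|A^*|\,|\s\cap\r|$ from the second set. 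Hence $d(s)=|A^*|\bigl(|\s|+|\s\cap\r|\bigr)$.

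It remains to identify $|\s|+|\s\cap\r|=|N|$, and I expect this to be the main obstacle, since I want it self-contained rather than read off from field order formulas. The tool is the Cayley transform $a\mapsto(a-1)(a+1)^{-1}$. Any $a\in N$ satisfies $a^*=a^{-1}$, and $a-1$ and $a+1$ commute because both products equal $a^2-1$; using this one checks that Cayley sends $\{a\in N:\overline a\neq -1\}$ (where reduction mod $\r$ makes $a+1\in A^*$) bijectively onto $\s$, with inverse $b\mapsto(1+b)(1-b)^{-1}$, noting $1-b\in A^*$ since a trace-zero element reduces to an element distinct from $1$. The complementary set $\{a\in N:\overline a=-1\}$ maps bijectively, via $a\mapsto -a$, to $K:=\{a\in N:\overline a=1\}$, and the same Cayley transform identifies $K$ with $\s\cap\r$. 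Summing the two pieces yields $|N|=|\s|+|\s\cap\r|$, and hence $d(s)=|A^*|\times|N|$. The delicate points to verify carefully are the commuting relation $(a-1)(a+1)=(a+1)(a-1)$ and the fact that Cayley misses exactly the fiber over $-1$, which is precisely what produces the correction term $|\s\cap\r|$.
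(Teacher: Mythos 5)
Your proof is correct, but it takes a genuinely different route from the paper's. The paper stays with a diagonal basis: in the isotropic case $h$ is of kind I, i.e.\ of type $\{1,-1\}$, so $d(s)$ counts solutions of $x_1^*x_1-x_2^*x_2=s$ with some $x_i$ a unit; since $s\in\m$ both $x_i$ must then be units, $x_2$ ranges freely over $A^*$, and for each such $x_2$ the element $s+x_2^*x_2$ lies in $Q(A^*)$ (it is $x_2^*x_2$ times an element of $1+\m\subseteq R^{*2}$), so the norm equation $x_1^*x_1=s+x_2^*x_2$ has exactly $|N|$ solutions, giving $|A^*|\,|N|$ at once. You instead pass to a hyperbolic basis, which splits the count as $|A^*|\bigl(|\s_A|+|\s_A\cap\r|\bigr)$, where $\s_A=\{a\in A: a+a^*=0\}$, and you must then prove the identity $|N|=|\s_A|+|\s_A\cap\r|$; this Cayley-transform identity is the real content of your argument, and your verification is sound: the commutation $(a-1)(a+1)=(a+1)(a-1)$, the invertibility of $1-b$ and $1+b$ for trace-zero $b$ (odd residue characteristic), and the splitting of $N$ into the fibers $\overline a\neq -1$ and $\overline a=-1$ (the latter matched with $\s_A\cap\r$ via $a\mapsto -a$) are all handled correctly. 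As for what each approach buys: the paper's count is shorter because the norm-fiber machinery ($R^{*2}\subseteq Q(A^*)$, surjectivity of squaring on $1+\m$) absorbs the work; yours needs only surjectivity of the trace and invertibility of $2$ on the counting side, and it produces the structural fact $|N|=|\s_A|+|\s_A\cap\r|$ as a by-product --- a cousin of the ``curious-looking identity'' the paper records after Proposition \ref{cam2}, where the hyperbolic viewpoint does appear. A further small merit of your write-up: in the non-isotropic case you make explicit the chain (non-isotropic $\Rightarrow$ $Q$ not surjective by Corollary \ref{sant} $\Rightarrow$ the induced involution on $A/\r$ is the identity by Proposition \ref{45}), which the paper's bare appeal to Lemma \ref{tiso} leaves implicit, since that lemma assumes the induced involution is trivial.
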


\noindent{\it Proof.} If $h$ is non-isotropic then $d(s)=0$ by Lemma \ref{tiso}. Suppose $h$ is isotropic. Then $h$ is of kind I.
If $x_1^*x_1-x_2^*x_2=s$, with $x_1$ or $x_2$ in $A^*$, then both $x_1,x_2$ are in $A^*$. Here
$x_2$ can be chosen arbitrarily in $A^*$ and, since $s+x_2^*x_2\in R^{*2}$, there are $|N|$ choices for
$x_1\in A^*$ to satisfy $x_1^*x_1=s+x_2^*x_2$.\qed

\begin{prop}
\label{uyuy} Let $m=2$ and suppose $s\in R^*$. Then

(a) If $h$ is isotropic then
$$
d(s)=\frac{|A|^2-|\r|^2-|A^*||N||\m|}{|R^*|}.
$$

(b) If $h$ is non-isotropic then
$$
d(s)=\frac{|A|^2-|\r|^2}{|R^*|}.
$$
\end{prop}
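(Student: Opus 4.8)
The plan is to exploit the transitivity result of Theorem \ref{crux} to replace the combinatorial count $d(s)$ by a group index that does not depend on $s$, and then recover the individual values by an averaging argument over all unit lengths. First I would note that, by Theorem \ref{crux}, the primitive vectors of a fixed length $s$ form a single orbit of $U$, so $d(s)=[U:S_u]$ whenever $h(u,u)=s$. For $m=2$ and $s\in R^*$, Lemma \ref{orz2} gives $V=uA\perp u^\perp$ with $u^\perp$ free of rank $1$, and the stabilizer $S_u$ is precisely the unitary group of the rank-one space $u^\perp$. A direct check shows this group is $N$: a generator $w$ of $u^\perp$ can only be sent to $wa$ with $a^*a=1$, since $h(w,w)$ is a central unit, and this is independent of the discriminant of $u^\perp$. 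Hence $d(s)=|U|/|N|$ takes one and the same value, say $d$, for every $s\in R^*$.

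Next I would count all primitive vectors at once. Writing $v=v_1x_1+v_2x_2$ in the orthogonal basis, primitivity means that $x_1,x_2$ are not both in $\r$, so the number of primitive vectors is $|A|^2-|\r|^2$; grouping them by their length $h(v,v)\in R$ gives $\sum_{s\in R}d(s)=|A|^2-|\r|^2$. Splitting the sum over the disjoint union $R=R^*\sqcup\m$ and using the constancy from the previous step yields $|R^*|\,d+\sum_{s\in\m}d(s)=|A|^2-|\r|^2$.

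It remains to evaluate the $\m$-part, and this is where the two cases diverge. By Lemma \ref{nuni}, in the non-isotropic case $d(s)=0$ for every $s\in\m$, so $|R^*|\,d=|A|^2-|\r|^2$ and dividing gives (b). In the isotropic case Lemma \ref{nuni} gives $d(s)=|A^*|\,|N|$ for each of the $|\m|$ values $s\in\m$, whence $|R^*|\,d=|A|^2-|\r|^2-|\m|\,|A^*|\,|N|$, which is (a). Since all the summands over $R^*$ are equal, dividing by $|R^*|$ returns the common value $d=d(s)$ exactly, with no divisibility issue.

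The only real subtlety, which I would flag as the crux, is the constancy of $d(s)$ across unit lengths; everything else is routine bookkeeping. This constancy is clean here precisely because for $m=2$ the stabilizer collapses to the rank-one unitary group $N$, whose order is manifestly independent of the discriminant of the orthogonal complement. For larger $m$ the complement would itself carry a hermitian form whose equivalence type could vary, so this shortcut is special to rank $2$ and is exactly what makes the present direct approach, bypassing the field case, possible.
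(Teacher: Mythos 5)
Your proof is correct and follows essentially the same route as the paper: both establish that $d(s)=[U:S_u]$ is constant over unit lengths $s$ because the stabilizer collapses to the rank-one unitary group $N$, then count all primitive vectors ($|A|^2-|\r|^2$), remove those of non-unit length via Lemma \ref{nuni}, and divide by $|R^*|$. No substantive differences to flag.
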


\noindent{\it Proof.} The number of primitive vectors of $V$ is $|A|^2-|\r|^2$. Of these, $|A^*||N||\m|$ have length in $\m$
if $h$ is isotropic, and 0 if $h$ is non-isotropic, by Lemma \ref{tiso}. Suppose $s\in R^*$ and let $u\in V$ be primitive of length $h(u,u)=s$.
By Lemma \ref{orz2} there is $w\in V$ such that $u,w$ is an orthogonal basis of $V$. The stabilizer of $u$ in $U$ consists of all $g\in U$
such that $gu=u$ and $gw=zw$ for some $z\in N$. Thus
the number of $U$-conjugates of primitive vectors of unit length is always the same. Since there are $|R^*|$ possible units,
the result follows.\qed

\begin{lem}
\label{cam1} If $m=1$ then $|U|=|N|$.
\end{lem}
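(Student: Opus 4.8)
The plan is to produce an explicit isomorphism $U\cong N$. Since $m=1$, the module $V$ is free of rank one; fix a basis vector $v$, so that $V=vA$ and $\{v\}$ is (trivially) an orthogonal basis. By Lemma \ref{orz} the scalar $\delta=h(v,v)$ lies in $R^*$, and in particular $\delta\in Z(A)$. Every $A$-linear endomorphism $g$ of $V$ is determined by its value $g(v)\in V=vA$, so $g(v)=vz$ for a unique $z\in A$, and $g\in\GL(V)$ exactly when $z\in A^*$. A direct check shows that $g\mapsto z$ is a group isomorphism $\GL(V)\xrightarrow{\sim}A^*$, since composition of endomorphisms corresponds to multiplication in $A$.

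Next I would translate the condition $g\in U$ into a condition on $z$. Using that $h$ is semilinear in the first variable and linear in the second, for arbitrary $u=va$ and $w=vb$ one computes
$$
h(gu,gw)=h(vza,vzb)=(za)^*\,h(v,v)\,(zb)=a^*z^*\delta z\,b.
$$
Because $\delta$ is central this equals $a^*\delta\,(z^*z)\,b$, whereas $h(u,w)=a^*\delta b$. Hence $g$ preserves $h$ (for all $u,w$, equivalently just for $u=w=v$) precisely when $\delta z^*z=\delta$, and since $\delta\in R^*$ this says $z^*z=1$.

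Finally I would identify this condition with membership in $N$. In the unit group $A^*$ a one-sided inverse is automatically two-sided, so $z^*z=1$ is equivalent to $z^*=z^{-1}$, hence to $zz^*=1$, i.e.\ $Q(z)=1$, i.e.\ $z\in N$. Therefore the isomorphism $\GL(V)\cong A^*$ restricts to an isomorphism $U\cong N$, and in particular $|U|=|N|$. There is essentially no obstacle here: the only points requiring care are the use of the centrality of $\delta=h(v,v)$ to move it past $z^*z$, and the elementary group-theoretic fact that $z^*z=1$ and $zz^*=1$ are equivalent for a unit $z$.
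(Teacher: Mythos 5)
Your proof is correct and is exactly the expansion of what the paper dismisses as "clear": fixing a basis vector identifies $\GL(V)$ with $A^*$, the unitary condition becomes $z^*z=1$, and for units this is equivalent to $zz^*=1$, i.e.\ $z\in N$. All the details you flag (centrality of $\delta=h(v,v)$, equivalence of the two one-sided conditions for a unit) are handled correctly, so nothing needs to be added.
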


\noindent{\it Proof.} This is clear.\qed

\begin{prop}
\label{cam2} Let $m=2$. If $h$ is isotropic then
$$
|U|=\frac{|A^*||A||N|}{|R|},
$$
and if $h$ is not isotropic then
$$
|U|=\frac{(|A|^2-|\r|^2) |N|}{|R^*|}=\frac{|A^*|(|A|+|\r|) |N|}{|R^*|}.
$$
\end{prop}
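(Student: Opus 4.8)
The plan is to compute $|U|$ by the orbit--stabilizer theorem applied to the action of $U$ on the set of primitive vectors of a fixed \emph{unit} length, reducing everything to the counting function $d(s)$ supplied by Proposition \ref{uyuy} and to the order of a rank-$1$ stabilizer. First I would fix a primitive vector $u\in V$ with $h(u,u)=s$ for some unit $s\in R^*$; such a vector exists by Theorem \ref{ort} (take the first vector of the orthogonal basis it produces) or directly by Proposition \ref{casom2}. By Theorem \ref{crux} the group $U$ acts transitively on the primitive vectors of length $s$, so the orbit of $u$ has $d(s)$ elements and $|U|=d(s)\,|S_u|$. To identify $S_u$, note that since $s\in R^*$, Lemma \ref{orz2} gives $V=uA\perp u^{\perp}$ with $u^{\perp}$ free of rank $1$ and the restriction of $h$ to it non-degenerate. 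Any $g\in U$ fixing $u$ preserves $u^{\perp}$ and restricts to a unitary map of it, and conversely every unitary map of $u^{\perp}$ extends by fixing $u$; hence $S_u$ is isomorphic to the rank-$1$ unitary group, of order $|N|$ by Lemma \ref{cam1}. Thus in every case $|U|=d(s)\,|N|$.

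If $h$ is non-isotropic this finishes the argument immediately: Proposition \ref{uyuy}(b) gives $d(s)=(|A|^2-|\r|^2)/|R^*|$, whence $|U|=(|A|^2-|\r|^2)|N|/|R^*|$, and the factorisation $|A|^2-|\r|^2=(|A|-|\r|)(|A|+|\r|)=|A^*|(|A|+|\r|)$, using $|A^*|=|A|-|\r|$, yields the second displayed form.

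The isotropic case is where the real work lies. Here Proposition \ref{uyuy}(a) gives
$$
|U|=\frac{\bigl(|A|^2-|\r|^2-|A^*|\,|N|\,|\m|\bigr)|N|}{|R^*|},
$$
and I must reconcile this with $|A^*|\,|A|\,|N|/|R|$. Cancelling $|N|$, clearing denominators, and using $|A|^2-|\r|^2=|A^*|(|A|+|\r|)$ together with $|R|=|R^*|+|\m|$, the desired equality collapses after elementary cancellation to the single identity
$$
|N|\,|R|=|A|+q\,|\r|,\qquad q=|R/\m|=|R|/|\m|.
$$
The main obstacle is that this identity is \emph{not} a formal relation among cardinalities: it requires the order of the norm kernel $N=\ker Q$, and here one must split according to Proposition \ref{45}. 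When $Q$ is surjective we have $|A/\r|=q^2$ and $|Q(A^*)|=|R^*|$, while when $Q$ is not surjective we have $|A/\r|=q$ and $[R^*:Q(A^*)]=2$; in either case $|N|=|A^*|/|Q(A^*)|$, combined with $|A|=|A/\r|\,|\r|$, $|A^*|=(|A/\r|-1)|\r|$, and $|R^*|=(q-1)|\m|$, verifies $|N|\,|R|=|A|+q\,|\r|$. Both sub-cases genuinely occur, since for $m=2$ a form with $Q$ surjective is automatically isotropic (it is equivalent to type $\{1,-1\}$ by Corollary \ref{sant}), whereas for $Q$ non-surjective the isotropic forms are exactly those of type $\{1,-1\}$ by Lemma \ref{tiso}(b); this establishes the isotropic formula and completes the proof.
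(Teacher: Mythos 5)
Your proposal is correct, but it takes a genuinely different route from the paper's in the isotropic case (your non-isotropic argument --- Proposition \ref{uyuy}(b) combined with Lemma \ref{cam1} --- is exactly the paper's). For isotropic $h$, the paper anchors the orbit--stabilizer count at an \emph{isotropic} vector: taking $u$ primitive with $h(u,u)=0$ and completing it to a hyperbolic pair $u,v$, it computes the stabilizer of $u$ directly as the group of matrices $\left(\begin{smallmatrix}1&b\\0&1\end{smallmatrix}\right)$ with $b+b^*=0$, of order $|A|/|R|$ by surjectivity of the trace, and pairs this with the orbit size $d(0)=|A^*||N|$ from Lemma \ref{nuni}; the formula $|U|=|A^*||A||N|/|R|$ then drops out with no arithmetic involving $|N|$ at all. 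You instead anchor at a unit-length vector in both cases, so $|U|=d(s)|N|$ with $d(s)$ from Proposition \ref{uyuy}(a), and must then reconcile the resulting expression with the stated one; as you observe, this collapses to the identity $|N||R|=|A|+q|\r|$, which is not a formal consequence of cardinality bookkeeping but requires the dichotomy of Proposition \ref{45} ($|A/\r|=q^2$ and $Q(A^*)=R^*$ versus $|A/\r|=q$ and $[R^*:Q(A^*)]=2$) together with $|N|=|A^*|/|Q(A^*)|$; your verification is correct in both branches, as is your observation that both branches genuinely occur for isotropic forms. Notably, the paper is aware of your route: the Note immediately following Proposition \ref{cam2} records precisely the ``curious-looking identity'' your argument forces, and states that its verification was carried out independently but omitted. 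So, in effect, your proof supplies that omitted verification. The trade-off is clear: the paper's argument is shorter and structurally cleaner, needing no knowledge of $|N|$ or of whether $Q$ is surjective, while yours is uniform (a single orbit type serves both cases) at the price of explicit norm-kernel computations, and it yields the paper's unproved identity as a by-product.
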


\noindent{\it Proof.} Suppose first $h$ is isotropic. Then there is a primitive vector $u$ such that $h(u,u)=0$.
As in the proof of Theorem \ref{crux} we may find $v\in V$ such that the Gram matrix of $u,v$ is
$$\left(
  \begin{array}{cc}
    0 & 1 \\
    1 & 0 \\
  \end{array}
\right).
$$
Note that $u,v$ is a basis of $V$. A calculation shows that the stabilizer, say $S$, of $u$ consists of all $g\in U$ that relative to the basis $u,v$
have matrix
$$\left(
  \begin{array}{cc}
    1 & b \\
    0 & 1 \\
  \end{array}
\right),
$$
where $b+b^*=0$. Since the trace map $A\to R$ is surjective, the number of such $b$ is $|A|/|R|$.
On the other hand, by Lemma \ref{nuni}, we have $[U:S]=|A^*||N|$, as required.

The case when $h$ is non-isotropic follows from Proposition~\ref{uyuy} and Lemma \ref{cam1}.\qed

\begin{note}{\rm We see from above that $|U_2(A)|$ is strictly larger in the non-isotropic case.}
\end{note}

\begin{note}{\rm Take $m=2$ and choose $h$ to be isotropic. Computing $|U|$ by means of Proposition \ref{uyuy} and
Lemma \ref{cam1} yields
$$
|U|=\frac{|N||A^*|(|A|+|\r|-|N||\m|)}{|R^*|}.
$$
It follows from Proposition \ref{cam2} that
$$
\frac{|A|+|\r|-|N||\m|}{|R^*|}=\frac{|A|}{|R|}.
$$
We have verified this curious-looking identity independently of $|U|$. We omit the details
as they are not relevant to our study of $U$.}
\end{note}

\section{Stabilizer of a Primitive Vector of Non Unit Length}
\label{sec:stapri}

The goal of this section is to find the order of the stabilizer $S_u$ in $U$ of a primitive vector $u\in V$
of non-unit length $h(u,u)=r$. Such $u$ does not exist when $m=1$, while $|S_u|$ was already determined in \S\ref{sec:rank2} when $m=2$.
We therefore assume throughout this section that $m>2$. We first obtain some general information about $S_u$ and then
derive a formula for $|S_u|$.

As described in the proof of Theorem
\ref{thm:transitivity}, we can find
$v,w_1,\dots,w_{m-2}\in V$ in such a way that $u,
v,w_1,\dots,w_{m-2}$ is a basis of $V$, relative to which the
Gram matrix of $h$ is
$$
B = \begin{pmatrix}
    r & 1 & 0 & \cdots & 0 \\
    1 & 0&0 &\cdots & \vdots\\
    0 & 0 & 1 & \cdots & 0 \\
    \vdots & & & \ddots & \vdots \\
    0 & 0 & 0 & \cdots & \delta \\
\end{pmatrix},\quad \delta \in \{1,\epsilon\}.
$$
It will be convenient to let $D\in\GL_{m-2}(R)$ denote the diagonal bottom right corner of $B$. We also let $U'$ stand for the unitary group
associated to $W=\langle w_1,\dots,w_{m-2}\rangle$ and the restriction $h'$ of $h$ to $W$.
Note that $h'$ is of the same kind as $h$, as defined in \S\ref{sec:class}.

Given a $1\times (m-2)$ row vector $C$, an
$(m-2) \times (m-2) $ matrix $Y$,
a scalar $a$ and an $(m-2) \times 1$ column vector $X$, all with entries in $A$, we let
\[ G(C,Y,a,X) =
    \begin{pmatrix}
        1   & a & C \\
        0   & 1-ra  & -rC \\
        0   & X &   Y
    \end{pmatrix}.
    \]
Let $g\in S_u$. Observe that, with respect to the basis $u,v,w_1,\dots,w_{m-2}$,
the matrix of $g$  must be of the form $G(C,Y,a,X)$ for some
$C$, $Y$, $a$ and $X$. This follows from
$gu = u$,
$h(u,gv) = 1$ and $ h(u,gw_i) = 0$ for $i= 1,\dots,m-2$.

\begin{prop} \label{prop:CompletionExistence}
    Let $C\in A^{m-2}$. Then there exist $Y$, $a$ and $X$ so that $G(C,Y,a,X)$
    represents an element of $S_u$.
\end{prop}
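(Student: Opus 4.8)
The plan is to set $M:=G(C,Y,a,X)$ and to read off the condition $M\in U$ as the matrix identity ${M^*}'B\,M=B$, where ${M^*}'$ denotes the transpose of the entrywise $*$-conjugate of $M$ (as in Lemma \ref{cuad}), and then to solve for $Y$, $X$ and $a$ in turn, with $C$ given. Since the first column of $M$ is $e_1$ we have $Mu=u$ automatically, so $M\in U$ already gives $M\in S_u$. Expanding ${M^*}'B\,M=B$ in the three block rows and columns indexed by $u$, $v$ and $W$, the entries $a,\,C,\,1-ra,\,-rC$ of $M$ are tailored precisely so that every equation involving the first block row or first block column holds identically; what remains are the conditions
\[
{Y^*}'DY=D+{C^*}'rC,\qquad {X^*}'DY=-(1-ra^*)C,\qquad a+a^*-ra^*a+{X^*}'DX=0,
\]
the block equation from the $W$–$v$ position being the $*$-conjugate of the second. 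Recall that $r\in\m$, since $u$ has non-unit length.

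First I would solve for $Y$. Put $G_0:=D+{C^*}'rC$. This matrix is $*$-hermitian, and as $r\in\m\subseteq\r$ it reduces modulo $\r$ to the invertible matrix $\overline{D}$; hence $G_0$ is invertible and is the Gram matrix of a non-degenerate hermitian form on $W$ whose reduction modulo $\r$ coincides with that of $D$. By Corollary \ref{sant} these two forms on $W$ are equivalent, and an equivalence furnishes an invertible $Y\in\GL_{m-2}(A)$ with ${Y^*}'DY=G_0$, solving the first equation.

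With such a $Y$ fixed, $DY$ is invertible, so the second equation determines $X$ as a function of $a$ via ${X^*}'=-(1-ra^*)C(DY)^{-1}$. Substituting this into the third equation is the heart of the matter. Using $({Y^*}'DY)^{-1}=G_0^{-1}$, one computes ${X^*}'DX=(1-ra^*)\,\beta\,(1-ra)$, where $\beta:=CG_0^{-1}{C^*}'$ lies in $R$ (a quick check gives $\beta^*=\beta$, since $G_0$ is hermitian). Together with the identity $(1-ra^*)(1-ra)=1-r\phi(a)$, in which $\phi(a):=a+a^*-ra^*a\in R$, the third equation collapses to the scalar condition $(1-\beta r)\phi(a)=-\beta$. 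As $\beta r\in\m$, we have $1-\beta r\in 1+\m\subseteq R^*$, so this is equivalent to $\phi(a)=\gamma$ with $\gamma:=-\beta(1-\beta r)^{-1}\in R$.

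It remains to solve $\phi(a)=\gamma$, which I expect to be the one genuinely delicate point, though it is elementary. It suffices to find $a\in R$; there $\phi(a)=2a-ra^2$, and since $2\in R^*$ while $r\in\m$ is nilpotent ($A$ being finite), the map $a\mapsto 2a-ra^2$ of $R$ reduces modulo $\m$ to multiplication by $2$ and has derivative $2(1-ra)\in R^*$ at every point; a standard Hensel/successive-approximation argument then shows it is onto $R$, so the required $a$ exists. Recovering $X$ from ${X^*}'=-(1-ra^*)C(DY)^{-1}$ with this $a$, and retaining the $Y$ from the first step, produces $M=G(C,Y,a,X)\in S_u$, as required.
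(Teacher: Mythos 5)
Your proof is correct, and it follows the paper's overall skeleton: membership of $G(C,Y,a,X)$ in $S_u$ is reduced to the same three block equations (yours are the paper's (\ref{eq:Yconstraint}), (\ref{eq:aX1}) and the conjugate-transpose of (\ref{eq:aX2})), $Y$ is found first, $X$ is eliminated, and everything comes down to one scalar quadratic equation in $a$. Where you genuinely differ is in how the two existence steps are discharged. For $Y$, the paper solves ${Y^*}'DY=D+r{C^*}'C$ by an explicit entry-by-entry construction of an upper triangular matrix with unit diagonal, using surjectivity of the squaring map on $1+\m$; you instead observe that $D+r{C^*}'C$ is hermitian, invertible, and reduces to the same matrix as $D$ modulo $\r$, and invoke Corollary \ref{sant} to produce an invertible $Y$ in one stroke (legitimate, since the classification theory applies to the free module $W$ of rank $m-2\geq 1$, as $m>2$ throughout this section). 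This is shorter and reuses machinery the paper has already established, whereas the paper's construction is self-contained and explicit. For the final step, your reduction to $(1-\beta r)\phi(a)=-\beta$ is exactly the paper's equation (\ref{eq:balanc}) with $k=\beta$; the paper then proves Lemma \ref{lem:quadraticequation}, solving $ra^*a-(a+a^*)=t$ in $A$ by a fixed-point iteration, while you restrict to $a\in R$, where the equation becomes the one-variable polynomial equation $2a-ra^2=\gamma$ over the finite commutative local ring $R$, and Hensel/Newton applies because $r\in\m$ is nilpotent and the derivative $2(1-ra)$ is a unit everywhere. These are the same successive-approximation idea in different dress, and your restriction to $R$ costs nothing, since any solution in $R$ is a fortiori a solution in $A$.
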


\begin{proof}
    By definition $G=G(C,Y,a,X)$ represents an element of $U$ if and
    only if ${G^*}'BG = B$. Therefore
    it suffices to find $Y$, $a$ and $X$ so that
    \begin{equation}\label{eq:Yconstraint}
        {Y^*}'D Y = D+r{C^*}'C,
    \end{equation}
            \begin{equation}\label{eq:aX1}
                -ra^*a+(a+a^*) + {X^*}'D X = 0,
            \end{equation}
            \begin{equation}\label{eq:aX2}
                {Y^*}'D X = {C^*}'(ar-1).
            \end{equation}

   First consider (\ref{eq:Yconstraint}). We assert the existence of an upper triangular matrix $Y$, with unit diagonal entries, non-unit off-diagonal entries, and satisfying (\ref{eq:Yconstraint}). To see this,
    first note that $T=r{C^*}'C\in M_{m-2}(A)$ is hermitian. We next find $Y$ by successively computing its entries from top to bottom
    and left to right. The first column of (\ref{eq:Yconstraint}), written in row form, reads as follows:
    $$
    (Y_{11}^*,Y_{12}^*,Y_{13}^*,\dots,Y_{1,m-2}^*)Y_{11}=(1+r T_{11},rT_{12}^*,rT_{13}^*,\dots,rT_{1,m-2}^*).
    $$
    Since $r\in\m$ we can certainly find $Y_{11}\in A^*$ such that $Y_{11}^*Y_{11}=1+rT_{11}$ and then, using $Y_{11}\in A^*$,
    we can find $Y_{1i}\in\r$ such that $Y_{1i}^*Y_{11}=rT_{1i}^*$ for $i=2,\dots,m-2$.

    The second column of (\ref{eq:Yconstraint}), also written in row form, reads as follows:
    $$
    (Y_{11}^*,Y_{12}^*,Y_{13}^*,\dots,Y_{1,m-2}^*)Y_{12}+(0,Y_{22}^*,Y_{23}^*,\dots,Y_{2,m-2}^*)Y_{22}=
    (r T_{12},1+rT_{22},rT_{23}^*,\dots,rT_{2,m-2}^*).
    $$
    The equation $Y_{11}^*Y_{12}=rT_{12}$ is compatible with what we encountered before. Since $Y_{12}^*Y_{12}\in\m$
    we can find $Y_{22}\in A^*$ satisfying $Y_{12}^*Y_{12}+Y_{22}^*Y_{22}=1+rT_{22}$. Then, using $Y_{22}\in A^*$,
    we can find $Y_{2i}\in\r$ such that $Y_{1i}^*Y_{12}+Y_{2i}^*Y_{22}=r T_{2i}^*$ for $i=3,\dots,m-2$.

    Proceeding this way, it is easy to see that we can choose $Y$ as specified.
    Note that $\delta$ will be involved in these calculations
    only when solving for $Y_{m-2,m-2}$ in
    $$
    Y_{1,m-2}^*Y_{1,m-2}+\cdots+Y_{m-1,m-2}^*Y_{m-1,m-2}+\delta Y_{m-2,m-2}^*Y_{m-2,m-2}=\delta+rT_{m-2,m-2}.
    $$
This has a solution since $$1+\delta^{-1}(rT_{m-2,m-2}-Y_{1,m-2}^*Y_{1,m-2}-\cdots-Y_{m-1,m-2}^*Y_{m-1,m-2})\in 1+\m.$$

    Next consider (\ref{eq:aX1}) and (\ref{eq:aX2}). Substituting
    for $X$ in (\ref{eq:aX1}) using first (\ref{eq:aX2}) and then (\ref{eq:Yconstraint}) we obtain
    \begin{equation}\label{eq:aconstraint}
        ra^*a-(a+a^*) = (1-ar)^*C(D+r{C^*}'C)^{-1}{C^*}'(1-ar).
    \end{equation}
    Thus, it suffices to show that we can find $a$ that satisfies
    (\ref{eq:aconstraint}).
    Letting $$k = C(D+r{C^*}'C)^{-1}{C^*}'\in R,$$
    collecting like terms in (\ref{eq:aconstraint}), and using $1-rk\in R^*$,
    we can rewrite (\ref{eq:aconstraint}) as follows:
    \begin{equation}\label{eq:balanc}
        ra^*a - (a+a^*)= k(1-rk)^{-1}.
    \end{equation}
   We show in Lemma \ref{lem:quadraticequation}
    below that there is some $a$ that satisfies this
    last equation.
\end{proof}

\begin{lemma}\label{lem:quadraticequation}
    For any $r \in \m$ and
    $t \in R$, there is some $a\in A$ such that  \[ra^*a -(a+a^*) = t.\]
\end{lemma}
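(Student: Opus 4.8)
The plan is to solve the equation $ra^*a-(a+a^*)=t$ for $a\in A$ by writing the unknown in the form $a=\tfrac12+b$, where the $\tfrac12$ comes from the element $d=\tfrac12$ whose existence is guaranteed since the characteristic of $R/\m$ is odd. The motivation is that the troublesome linear term $-(a+a^*)$ should be absorbed into a constant, leaving an equation in $b$ that can be attacked by a norm-type argument. Substituting $a=\tfrac12+b$ and using that $b+b^*$ is the trace of $b$ (an element of $R$) should collapse the left-hand side into an expression involving $rb^*b$ together with lower-order terms; the first step is therefore to carry out this substitution and simplify $ra^*a-(a+a^*)$ into the cleanest possible normal form.

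After the substitution I expect the equation to reduce, up to reorganizing constants, to something of the shape $rb^*b = t'$ for a suitable $t'\in R$ depending on $r$ and $t$, or more precisely to an equation where the essential content is that a prescribed element of $\m$ (namely a unit multiple of $r$ times something) must be represented as $r$ times a norm $b^*b=Q(b)$. Here the key structural facts from earlier in the excerpt come into play: the norm map $Q\colon A^*\to R^*$ has image either $R^*$ or $R^{*2}$ (from the discussion preceding Proposition \ref{45}), and crucially the squaring map on the one-group $1+\m$ is an epimorphism by our standing assumption. Because $r\in\m$, the right-hand side $t$ lives in a controlled place relative to the filtration, and I would exploit the fact that multiplication by $r$ pushes everything into $\m$ where the $1+\m$ squaring hypothesis gives us a great deal of freedom to solve norm equations.

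The main obstacle will be handling the case distinction inherent in whether $Q$ is surjective: when $Q(A^*)=R^*$ every unit is a norm and the representation problem is immediate, but when $Q(A^*)=R^{*2}$ one must check that the particular target element is actually a norm, and this is where the structure of $t$ and the factor of $r\in\m$ must be used carefully. I anticipate that the cleanest route is to reduce modulo successive powers of $\r$ and solve the equation one layer at a time, using the surjectivity of the trace map $\r\to\m$ (which holds since $2\in R^*$) to adjust the trace term and the surjectivity of squaring on $1+\m$ to adjust the norm term; an inductive argument on the nilpotency degree of $\r$, in the spirit of the proof of Proposition \ref{33}, should then assemble a genuine solution $a\in A$ from these successive approximations. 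The delicate point to verify is that the adjustments at each layer are compatible and do not obstruct one another, so that the induction closes.
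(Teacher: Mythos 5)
Your opening move does not do what you claim. Substituting $a=\tfrac12+b$ into $ra^*a-(a+a^*)$ gives
\[
rb^*b+\left(\tfrac{r}{2}-1\right)(b+b^*)+\tfrac{r}{4}-1,
\]
so the linear term is not absorbed: its coefficient $\tfrac{r}{2}-1$ is a unit, and the equation in $b$ has exactly the same shape as the one you started with. Completing the square to kill the linear term would require the shift $b=a-r^{-1}$, which is unavailable precisely because $r\in\m$ is not a unit (it may even be $0$). Worse, the target you are aiming for, a pure norm equation $rb^*b=t'$, cannot be the right reduction: the left-hand side always lies in $rA\subseteq\r$, so it can never equal a unit, whereas the lemma asserts solvability for \emph{every} $t\in R$, units included, and the change of variables $a\mapsto b$ is bijective. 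Solvability rests entirely on keeping the trace term in play, so any substitution that eliminates it must fail; correspondingly, the case analysis on the image of $Q$ and the appeal to the squaring map on $1+\m$ are machinery for the wrong problem.

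Your last paragraph, however, contains the correct idea, and it is essentially the paper's proof -- except that no layer-by-layer compatibility check is needed, because the iteration closes by itself. Set $f(a)=ra^*a-(a+a^*)-t$, and note that $f(a)\in R$ is central and fixed by $*$. Correct by half the error: a direct computation gives
\[
f\!\left(a+\tfrac12 f(a)\right)=\tfrac12\, r\, f(a)\left(a+a^*+\tfrac12 f(a)\right),
\]
the point being that adding $\tfrac12 f(a)$ shifts the trace term by exactly $f(a)$, while every new contribution carries a factor of $rf(a)$. Hence if $a_{i+1}=a_i+\tfrac12 f(a_i)$, then $f(a_i)\in r^iR$ by induction, and since $r\in\m$ is nilpotent, $f(a_i)=0$ for $i$ large; no norm surjectivity, no hypothesis on $1+\m$, and no case distinction enter at all. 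Your instinct to iterate using surjectivity of the trace was right; the observation you were missing is that the error automatically contracts by a factor of $r$ at each step, which is exactly what makes the successive adjustments compatible and lets the induction terminate.
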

\begin{proof}
    Define $f: A \to R$ by $f(a) = ra^*a - (a+a^*) -t$. Observe that $$
    f(a+\frac12 f(a)) = \frac{1}{2}rf(a)(a+a^*+\frac12 f(a)).$$ Now $r$ is nilpotent,
    so for any sequence satisfying $a_{i+1} = a_i+\frac12 f(a_i)$,
    $f(a_i)=0$ for sufficiently large $i$.
\end{proof}

\begin{thm} \label{su}
Let $u\in V$ be a primitive vector of non unit length $h(u,u)$.
Let $S_u$ be the stabilizer of $u$ in $U$.
Suppose $m>2$ and let $U'$ be the unitary group associated to a hermitian form of the same kind as $h$ defined on a free $A$-module of rank $m-2$.
Then
$$
|S_u|=|U'|\times |A|^{m-1}/|R|.
$$
\end{thm}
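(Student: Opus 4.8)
The plan is to count $S_u$ directly through the parametrization $g=G(C,Y,a,X)$. Since $C,Y,a,X$ are literally the blocks of the matrix of $g$ in the basis $u,v,w_1,\dots,w_{m-2}$, distinct tuples give distinct elements of $S_u$, so $|S_u|$ equals the number of tuples $(C,Y,a,X)$ for which $G(C,Y,a,X)\in U$, equivalently for which (\ref{eq:Yconstraint}), (\ref{eq:aX1}) and (\ref{eq:aX2}) all hold. I would fibre this set over the coordinate $C\in A^{m-2}$; by Proposition \ref{prop:CompletionExistence} this is onto $A^{m-2}$, and the aim is to show every fibre has the same size $|U'|\cdot|A|/|R|$. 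Multiplying by the $|A|^{m-2}$ choices of $C$ then gives $|S_u|=|A|^{m-2}\cdot|U'|\cdot|A|/|R|=|U'|\,|A|^{m-1}/|R|$.

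Fix $C$. First I would note that (\ref{eq:Yconstraint}) forces ${Y^*}'DY=D+r{C^*}'C\equiv D\bmod\r$, so ${Y^*}'DY$ is invertible and hence so is $Y$ (a left inverse suffices over the finite ring $A$); consequently (\ref{eq:aX2}) determines $X$ uniquely from $Y$ and $a$. With $X$ so expressed, the computation already carried out in Proposition \ref{prop:CompletionExistence} shows that (\ref{eq:aX1}) is equivalent to (\ref{eq:balanc}), an equation involving only $a$ and $C$ (through $k=C(D+r{C^*}'C)^{-1}{C^*}'\in R$) and not $Y$. Thus for fixed $C$ the choices of $Y$ and of $a$ decouple, and the fibre size is the product of the number of $Y$ solving (\ref{eq:Yconstraint}) and the number of $a$ solving (\ref{eq:balanc}).

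For the $Y$-count I would fix one solution $Y_0$ of (\ref{eq:Yconstraint}), supplied by the explicit construction in Proposition \ref{prop:CompletionExistence}, and write $Y=Y_0g$; then $Y$ solves (\ref{eq:Yconstraint}) precisely when $g$ lies in the unitary group of the Gram matrix $D'=D+r{C^*}'C$. Since $D'\equiv D\bmod\r$, Corollary \ref{sant} shows that $D'$ and $D$ define equivalent non-degenerate hermitian forms of rank $m-2$, so their unitary groups are isomorphic and the number of admissible $Y$ equals $|U'|$, independent of $C$.

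For the $a$-count the task is to show that for every $t\in R$ the equation $ra^*a-(a+a^*)=t$ has exactly $|A|/|R|$ solutions $a\in A$; this is the step I expect to require the most care. Writing $a=x+y$ with $x=(a+a^*)/2\in R$ and $y=(a-a^*)/2$ of trace zero (legitimate since $2\in R^*$), one computes $a^*a=x^2-y^2$ and $a+a^*=2x$, so the equation becomes, for each fixed $y$, the scalar quadratic $rx^2-2x=t+ry^2$ in $x\in R$. The decisive point is that this quadratic has a \emph{unique} solution in $R$: if $x,x'$ both solve it then $(x-x')\bigl(r(x+x')-2\bigr)=0$, and $r(x+x')-2$ is a unit because $r\in\m$, forcing $x=x'$, while existence is guaranteed by Lemma \ref{lem:quadraticequation}. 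Summing the single solution over the $|A|/|R|$ trace-zero elements $y$ (the kernel of the surjective trace map $A\to R$) gives exactly $|A|/|R|$ solutions $a$, independent of $t$. Applying this with $t=k(1-rk)^{-1}$ shows the $a$-count is $|A|/|R|$ for every $C$, and combining with the previous paragraph yields $|S_u|=|A|^{m-2}\cdot|U'|\cdot|A|/|R|=|U'|\,|A|^{m-1}/|R|$, as required.
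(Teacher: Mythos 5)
Your proof is correct, and its skeleton---parametrizing $S_u$ by the blocks $(C,Y,a,X)$, fibring over $C\in A^{m-2}$ (onto, by Proposition \ref{prop:CompletionExistence}), counting $|U'|$ admissible $Y$'s by a torsor argument, and extracting the factor $|A|/|R|$ from the remaining freedom---matches the paper's. Where you genuinely diverge is in how that last factor is obtained. The paper fixes one $g=G(C,Y,a,X)\in S_u$ and shows that the set of elements $G(C,Y,a_1,X_1)\in S_u$ is exactly the coset $gL_u$, where $L_u=\{G(0,I,\alpha,0)\in S_u\}$ is isomorphic to the stabilizer $H_u$ of $u$ in the rank-two unitary group of $\langle u,v\rangle$; it then gets $|H_u|=|A|/|R|$ from Lemma \ref{nuni} (orbit--stabilizer, plus the observation that $|H_u|$ is independent of $r\in\m$, evaluated at $r=0$). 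You instead note that $X$ is determined by $(Y,a)$ via (\ref{eq:aX2}), that (\ref{eq:aX1}) then collapses to the $Y$-free scalar equation (\ref{eq:balanc}), and you count its solutions directly by writing $a=x+y$ with $x\in R$ central and $y$ of trace zero: for each of the $|A|/|R|$ trace-zero elements $y$, the quadratic $rx^2-2x=t+ry^2$ has exactly one root $x\in R$. This is more self-contained: it bypasses \S\ref{sec:rank2} entirely (no Lemma \ref{nuni}, no independence-of-$r$ trick) and makes the decoupling of $Y$ from $(a,X)$ explicit. The small price is in your $Y$-count: the paper's torsor is under $U'$ itself (left multiplication $Y_1=g'Y$), whereas your right-multiplication torsor is under the unitary group of the Gram matrix $D+r{C^*}'C$, which you must then identify with $U'$ via Corollary \ref{sant}; both routes are valid.

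One citation to tighten: Lemma \ref{lem:quadraticequation} as stated produces a solution $a\in A$ of $ra^*a-(a+a^*)=t$, not a root $x\in R$ of the scalar quadratic $rx^2-2x=t'$, so your existence claim does not follow literally from the lemma. This costs nothing: your own uniqueness computation shows that $x\mapsto rx^2-2x$ is injective on the finite ring $R$, hence surjective, so existence is automatic; alternatively, the Newton-type iteration in the lemma's proof stays inside $R$ when started there (or one applies the lemma to $R$ equipped with the identity involution). But as written, the appeal to the lemma is slightly off and should be rephrased in one of these ways.
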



\begin{proof} Let $C\in A^{m-2}$.  By Proposition \ref{prop:CompletionExistence} there is at least one $g=G(C,Y,a,X)\in S_u$ and we fix this element.
It follows from (\ref{eq:Yconstraint}) that $Y$ is invertible.
We claim that the $Y_1$'s such that $G(C,Y_1,a_1,X_1)\in S_u$
for some $a_1,X_1$ are precisely those of the form $Y_1=g'Y$ for some $g'\in U'$. Indeed, we may extend any $g'\in U'$ to $g_1=G(0,g',0,0)\in S_u$. Then $g_1g=G(C,g'Y,a,g'X)$ is in $S_u$. Suppose, conversely, that $G(C,Y_1,a_1,X_1)\in S_u$. Then by  (\ref{eq:Yconstraint})
$$
    {Y_{1}^*}'D Y_1 = D - r{C^*}'C = {Y^*}'D Y.
$$
As indicated above, $Y$ and $Y_1$ must be invertible, so
$${(Y_1Y^{-1})^*}'D Y_1Y^{-1} = D.$$
Therefore $Y_1Y^{-1}=g'\in U'$ and $Y_1=g'Y$, as claimed. Thus there are $|U'|$ choices for $Y_1$ such that $G(C,Y_1,a_1,X_1)\in S_u$.

Let $L_u$ be the group of all $G(0,I,\alpha,0)\in S_u$. We claim that $g_1=G(C,Y,a_1,X_1)\in S_u$ if and only if $g_1=gg_0$ with $g_0\in L_u$. Indeed, given $g_0\in L_u$ we have $gg_0=G(C,Y,a_1,X_1)$. Conversely, if $g_1=G(C,Y,a_1,X_1)\in S_u$ then
$g_0=g^{-1}g_1$ fixes $w_1,\dots,w_{m-2}$, so $g_0$ preserves $\langle u,v\rangle=\langle w_1,\dots,w_{m-2}\rangle^\perp$.
But $g_0$ also fixes $u$, so necessarily $g_0\in L_u$, with $g_1=gg_0$.

Let $H$ be the unitary group of $\langle u,v \rangle$ and let $H_u$ be the stabilizer of $u$ in $H$. Clearly
$L_u\cong H_u$. By Lemma \ref{nuni}
we know that $|H_u|$ is independent of $r$, as long as $r \in \m$. So,
to compute $|H_u|$, we can assume for the moment that $r = 0$. Thus
$|H_u| = |\{\alpha \in A: \alpha^* + \alpha = 0\}| = |A|/|R|$, since the trace map is surjective. The result now follows.
\end{proof}

\begin{note}{\rm The formula is still correct if $m=2$, as it reduces to $|S_u|=|A|/|R|$, as expected.}
\end{note}

\begin{note}{\rm A special case of this formula, namely for $O_m(q)$, appears in \cite{RW}, pg. 72.}
\end{note}

%
%
%
%


\section{Weil character degrees}

In this section $\ell\geq 1$ and $A=\mathcal{R}/\mathcal{P}^{2\ell}$, with $F/K$ ramified, and involution $*$ inherited from~$\mathcal{R}$.
We wish to determine the degrees of the irreducible constituents of the top layer of the Weil representation of $U_m(A)$, as described
in \cite{HS}. For this purpose, we fix the $*$-invariant ideal $\r^\ell$ of $A$ and consider $B=\overline{A}$, $\overline{V}$ and $\overline{h}$, as in \S\ref{sec:class}, recalling that $h$ and $\overline{h}$ are of the same kind. We also let $\overline{U}=U_m(B)$, as in \S\ref{sec:Epimorphism}.

Let $\ell = 2f$ or $\ell=2f-1$ depending on whether $\ell$ is even or odd. As mentioned in the Introduction, the degrees of the irreducible constituents of the top layer of the Weil character of $U_{m}(A)$ are equal to $[U_m(B):S_u]/2q^{\ell-f}$, where
$u$ is a primitive vector in $\overline{V}$.

The formulas in Corollary \ref{pom2} and Theorem \ref{su}, plus the orders of finite orthogonal groups (see \cite[Theorem 6.17]{NJ}), are all that is needed to compute $[U_m(B):S_u]$ in general. 
The easiest case of these calculations, namely when $\ell=1$ and $B\cong F_q$, were performed in \cite{HS}.
We will make no use of this case. The reader can easily verify that our general formulas reduce to those of \cite{HS} when $\ell=1$.

We have an involution $\circ$ on $B$ inherited from $A$ and we let $\overline{R}$ stand for the fixed ring of $\circ$. As indicated at the end of \S\ref{sec:stapri}, we have $|\overline{R}|=q^f$.
The maximal ideal of $\overline{R}$ will be denoted by $\overline{\m}$ and we further set $\overline{N}=\{z\in B\,|\, zz^\circ=1\}$.

When $m=1$, $U_1(A) \le A^*$ is abelian so the degrees of the irreducible constituents of the Weil character will be $1$.
Thus we assume throughout this section that $m\geq 2$. We begin by computing $[U_m(B):S_u]$ when $m=2$, as this case
has essentially been done in Propositions \ref{nuni} and \ref{uyuy}. The case $m=2$ is reobtained later as part of
the general case $m\geq 2$.

\begin{prop} Suppose $m=2$.

(a) If $\overline{h}$ is non-isotropic, then $t$ must be a unit and $[U_2(B):S_u]= q^{2\ell-f-1}(q+1)$.

(b) If $\overline{h}$ is isotropic, then $[U_2(B):S_u]= q^{2\ell-f-1}(q-1)$ if $t$ is a unit and $[U_2(B):S_u]= 2q^{2\ell-f-1}(q-1)$ if $t$ is not a unit.
\end{prop}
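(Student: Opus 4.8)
The plan is to identify $[U_2(B):S_u]$ with the quantity $d(t)$ counting primitive vectors of length $t=\overline{h}(u,u)$ in $\overline{V}$, and then to specialize the rank-2 counting formulas already established. By Theorem~\ref{crux}, applied to $\overline{U}=U_2(B)$, any two primitive vectors of the same length are $\overline{U}$-conjugate, so the orbit of $u$ consists of exactly the primitive vectors of length $t$; since that orbit has size $[U_2(B):S_u]$, we obtain $[U_2(B):S_u]=d(t)$. It therefore suffices to read Lemma~\ref{nuni} and Proposition~\ref{uyuy} for the ring $B$ with its involution $\circ$ and to substitute the relevant orders.

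First I would record the orders. Since $F/K$ is ramified, $B\cong\mathcal{R}/\mathcal{P}^\ell$ gives $|B|=q^\ell$, $|\overline{\r}|=q^{\ell-1}$ and $|B^*|=q^{\ell-1}(q-1)$; on the fixed side I use $|\overline{R}|=q^f$ (established at the end of \S\ref{sec:stapri}), whence $|\overline{\m}|=q^{f-1}$ and $|\overline{R}^*|=q^{f-1}(q-1)$. The one computation requiring thought is $|\overline{N}|$. Because $F/K$ is ramified, the involution induced on the residue field $B/\overline{\r}\cong F_q$ is trivial, so Proposition~\ref{45}(a) applies: the norm map is not surjective and $Q(B^*)=\overline{R}^{*2}$. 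As $[\overline{R}^*:\overline{R}^{*2}]=2$, this yields $|\overline{N}|=|B^*|/|Q(B^*)|=2|B^*|/|\overline{R}^*|=2q^{\ell-f}$.

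With these orders the three cases become routine substitutions. In the non-isotropic case Lemma~\ref{tiso} forbids primitive vectors of length in $\overline{\m}$, so $t$ is forced to be a unit, and Proposition~\ref{uyuy}(b) gives $d(t)=(|B|^2-|\overline{\r}|^2)/|\overline{R}^*|$; factoring $q^{2\ell}-q^{2\ell-2}=q^{2\ell-2}(q-1)(q+1)$ and dividing by $q^{f-1}(q-1)$ yields $q^{2\ell-f-1}(q+1)$, establishing (a). In the isotropic case with $t$ a unit, Proposition~\ref{uyuy}(a) gives the same numerator diminished by $|B^*||\overline{N}||\overline{\m}|=2(q-1)q^{2\ell-2}$; the numerator collapses to $q^{2\ell-2}(q-1)^2$, and division by $|\overline{R}^*|$ gives $q^{2\ell-f-1}(q-1)$. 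Finally, for $t$ not a unit, Lemma~\ref{nuni} gives $d(t)=|B^*||\overline{N}|=2q^{2\ell-f-1}(q-1)$, completing (b).

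I expect no serious obstacle here: the only step that is not pure bookkeeping is pinning down $|\overline{N}|$, which rests on the ramified hypothesis forcing a trivial residue-field involution and hence the index-$2$ description $Q(B^*)=\overline{R}^{*2}$. Everything else amounts to inserting the orders above into the already-proved rank-2 formulas and simplifying, so the main care needed is to keep the exponents straight through the factorizations.
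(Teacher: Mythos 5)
Your proposal is correct and follows essentially the same route as the paper: both identify $[U_2(B):S_u]$ with the count $d(t)$ of primitive vectors of length $t$ (via Theorem~\ref{crux}), compute $|\overline{N}|=2q^{\ell-f}$ from the fact that the ramified hypothesis forces the norm image to be $\overline{R}^{*2}$, and then substitute into Lemma~\ref{nuni} and Proposition~\ref{uyuy}. Your write-up is in fact slightly more careful than the paper's, since you justify $Q(B^*)=\overline{R}^{*2}$ explicitly through Proposition~\ref{45}(a) rather than asserting it.
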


\begin{proof} (a) We know that $t$ must be a unit by Lemma \ref{tiso}. Moreover, Proposition \ref{uyuy} gives
$$[U_2(B):S_u] = \frac{|B|^2-|\overline{\r}|^2}{|\overline{R}^*|} = \frac{(q^{2\ell}-q^{2\ell-2})}{(q^f-q^{f-1})} = q^{2\ell-f-1}(q+1).$$

(b) If $t$ is not a unit, then by Proposition \ref{nuni}, $[U_2(B):S_u]=|B^*||\overline{N}|$. The norm map is a group homomorphism from $B^*$, which has order $q^{\ell}-q^{\ell-1}$ onto $\overline{R}^{*2}$, which has order $\frac{(q^{f}-q^{f-1})}{2}$. Therefore, $[U_2(B):S_u]= 2q^{\ell-f}(q^{\ell}-q^{\ell-1}) = q^{2\ell-f-1}(q-1)$.

If $t$ is a unit, then by Proposition \ref{uyuy}, $$\begin{array}{rl}
[U_2(B):S_u] &=\dfrac{|B|^2-|\overline{\r}|^2-|B^*||\overline{N}||\overline{\m}|}{|\overline{R}^*|}
=\frac{(q^{2\ell}-q^{2\ell-2})-(q^{\ell}-q^{\ell-1})(2q^{\ell-f})(q^{f-1}))}{(q^f-q^{f-1})} \\
&= \frac{(q^{2\ell-2}(q^2-1)-(2q^{2\ell-2})(q-1)}{q^{f-1}(q-1)} =
\frac{q^{2\ell-2}(q+1)-2q^{2\ell-2}}{q^{f-1}} = q^{2\ell-f-1}(q-1).
\end{array}$$

\end{proof}

We know proceed to the general case $m\geq 2$. 
Suppose first $t$ is a unit. Then $S_u \simeq U_{m-1}(B)$ is a unitary group for a hermitian form $b$, namely the restriction
of $\overline{h}$ to $\langle u\rangle^\perp$. If $m$ is even then $|U_{m-1}(B)|$ is independent of $b$.
When $m$ is odd this is no longer true and in this case we will state our result according on the kind of $b$. But this depends on the kind of $\overline{h}$, which is the same as that of $h$, as well as on $t$ and~$q$.
Indeed, we easily see that if $m$ is odd then $b$ is of kind $I$ if and only if one of the following holds:

a) $\bar{h}$ is of kind $I$, $-1$ is a square, and $t$ is a square,

b) $\bar{h}$ is of kind $I$, $-1$ is not a square, and $t$ is not a square,

c) $\bar{h}$ is of kind $II$, $-1$ is not a square, and $t$ is a square, and

d) $\bar{h}$ is of kind $II$, $-1$ is a square, and $t$ is not a square.

Otherwise, $b$ is of kind $II$.

\begin{thm} Suppose $t$ is a unit.

(i) If $m=2r+1$ is odd, then
$$[U_m(B):S_u] = \begin{cases} q^{m\ell-m+r-f+1}(q^r+1) \text{ if $b$ is of kind $I$, and } \\
                             q^{m\ell-m+r-f+1}(q^r-1) \text{ if $b$ is of kind $II$.} \end{cases} $$


(ii) If $m=2r$ is even, then
$$[U_m(B):S_u] = \begin{cases} q^{m\ell-m+r-f}(q^r-1) \text{ if $\bar{h}$ is of kind $I$, and } \\
                             q^{m\ell-m+r-f}(q^r+1) \text{ if $\bar{h}$ is of kind $II$.} \end{cases} $$

\end{thm}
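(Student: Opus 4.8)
The plan is to reduce the computation of $[U_m(B):S_u]$ to two quantities that are already available: the order $|U_m(B)|$ from Corollary \ref{pom2}, and the order $|S_u|$ from Theorem \ref{su}. Since $t=h(u,u)$ is a unit, the stabilizer $S_u$ is itself a unitary group: it equals $U_{m-1}(B)$ acting on the orthogonal complement $\langle u\rangle^\perp$, equipped with the restriction $b$ of $\overline{h}$. Thus the first step is simply to write
$$
[U_m(B):S_u]=\frac{|U_m(B)|}{|U_{m-1}(B)|},
$$
where the numerator is evaluated using Corollary \ref{pom2} and the denominator likewise, after recording the kind of the form $b$ governing $U_{m-1}(B)$ according to cases (a)--(d) above.

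The second step is to substitute the orders of the finite orthogonal groups $|O_n(q)|$ from \cite[Theorem 6.17]{NJ}. Recall that the relevant closed forms are, for even rank $n=2k$,
$$
|O_{2k}^{\pm}(q)|=2\,q^{k(k-1)}(q^k\mp 1)\prod_{i=1}^{k-1}(q^{2i}-1),
$$
and for odd rank $n=2k+1$,
$$
|O_{2k+1}(q)|=2\,q^{k^2}\prod_{i=1}^{k}(q^{2i}-1),
$$
with the sign in the even case recording kind I versus kind II. When I form the ratio $|U_m(B)|/|U_{m-1}(B)|$, the powers of $q$ arising from the $q^{m^2(\ell-1)}$-type prefactors in Corollary \ref{pom2} combine cleanly, and most of the $\prod(q^{2i}-1)$ factors telescope, leaving a single binomial factor $q^r\pm 1$ times a pure power of $q$. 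The parity bookkeeping differs between the two parts: when $m=2r+1$ is odd, $U_m(B)$ reduces to an odd orthogonal group (whose order is kind-independent) while $U_{m-1}(B)$ is an even orthogonal group whose sign is determined by the kind of $b$; when $m=2r$ is even, the roles reverse, so the surviving sign is dictated by the kind of $\overline{h}$ itself rather than by the auxiliary $t$. This explains why (i) is stated in terms of the kind of $b$ and (ii) in terms of the kind of $\overline{h}$.

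The main obstacle will be the exponent arithmetic: Corollary \ref{pom2} carries different prefactors $q^{(m^2(\ell-1)+m)/2}$ and $q^{m^2(\ell-1)/2}$ according to the parity of $\ell$, so I must verify that the same final exponent $m\ell-m+r\mp f$ emerges in both the even-$\ell$ and odd-$\ell$ subcases, using the relation $\ell-f\in\{f-1,f\}$ to absorb the $\pm f$ shift together with the factor $2q^{\ell-f}$ coming from $|\overline{N}|$ and the factors of $2$ in the orthogonal orders. I would handle the even-$m$ and odd-$m$ cases separately, in each case treating even and odd $\ell$, and in each case track the single binomial factor to confirm the sign assignment against the kind analysis in (a)--(d). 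A useful sanity check is to specialize to $m=2$: part (ii) with $r=1$ should reproduce the formulas $q^{2\ell-f-1}(q\mp 1)$ of the preceding proposition, and this cross-check pins down both the exponent and the sign conventions.
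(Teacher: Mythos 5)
Your proposal follows the paper's proof essentially step for step: identify $S_u\simeq U_{m-1}(B)$ acting on $\langle u\rangle^\perp$, write $[U_m(B):S_u]=|U_m(B)|/|U_{m-1}(B)|$, evaluate both orders via Corollary \ref{pom2} (handling the two parities of $\ell$ separately), and finish with the ratios of orthogonal group orders from \cite[Theorem 6.17]{NJ}, the surviving sign being governed by the kind of $b$ when $m$ is odd and by the kind of $\overline{h}$ when $m$ is even. The only stray remark is your mention of the factor $2q^{\ell-f}$ coming from $|\overline{N}|$, which plays no role in computing the index itself (it enters only the Weil degree normalization, and the factors of $2$ in the orthogonal orders simply cancel in the ratio); everything else, including the final exponent arithmetic, is exactly the paper's calculation.
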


\begin{proof} {\it (i)} By Corollary \ref{pom2}, if $\ell=2f-1$ then
$$\begin{array}{rcl}
[U_m(B):S_u] & = & \dfrac{|U_m(B)|}{|U_{m-1}(B)|} \\
             & & \\
             & = & \dfrac{q^{m^2(\ell-1)/2}|O_m(q)|}{q^{(m-1)^2(\ell-1))/2}|O_{m-1}(q)|} \\
             & & \\
             &=&  q^{m\ell - m - f + 1}\dfrac{|O_m(q)|}{|O_{m-1}(q)|},
             \end{array}$$
while if $\ell=2f$ then
$$\begin{array}{rcl}
[U_m(B):S_u] & = & \dfrac{|U_m(B)|}{|U_{m-1}(B)|} \\
             & & \\
             & = & \dfrac{q^{(m^2(\ell-1)+m)/2}|O_m(q)|}{q^{((m-1)^2(\ell-1)+(m-1))/2}|O_{m-1}(q)|} \\
             & & \\
             &=&  q^{m\ell - m -  f + 1} \dfrac{|O_m(q)|}{|O_{m-1}(q)|}.
             \end{array}$$
The result now follows, since when $m=2r+1$, \cite[Theorem 6.17]{NJ} gives
$$\dfrac{|O_m(q)|}{|O_{m-1}(q)|} = \begin{cases} q^r(q^r+1) \text{ if $b$ is of kind $I$, and } \\
                                 q^r(q^r-1) \text{ if $b$ is of kind $II$.} \end{cases} $$

\noindent {\it (ii)} When $m=2r$, \cite[Theorem 6.17]{NJ} gives
$$\dfrac{|O_m(q)|}{|O_{m-1}(q)|} = \begin{cases} q^{r-1}(q^r-1) \text{ if $\bar{h}$ is of kind $I$, and } \\
                                 q^{r-1}(q^r+1) \text{ if $\bar{h}$ is of kind $II$.} \end{cases} $$
The remaining calculations are exactly as above. This completes the proof.
\end{proof}

We proceed to the case $t \in \m$. Note that when $m=2$ this case can only occur when $\overline{h}$ is of kind I.

\begin{thm} Suppose $t \in \m$.

(i) If $\ell=2f-1$ is odd and $m$ is odd, then
$$[U_m(B):S_u]=q^{m\ell-m-\ell+f}(q^{m-1}-1).$$

(ii) If $\ell=2f-1$ is odd and $m=2r$ is even, then
$$[U_m(B):S_u]=\begin{cases} q^{m \ell -m - \ell+f}(q^r-1)(q^{r-1}+1) \text{ if $\bar{h}$ is of kind $I$,}\\
                             q^{m \ell -m - \ell+f}(q^r+1)(q^{r-1}-1) \text{ if $\bar{h}$ is of kind $II$.}
               \end{cases}$$

(iii) If $\ell=2f$ is even and $m$ is odd, then
$$[U_m(B):S_u]=q^{\ell m - m-\ell + f +1 }(q^{m-1}-1).$$

(iv) If $\ell=2f$ is even and $m=2r$ is even, then
$$[U_m(B):S_u]=\begin{cases} q^{\ell m - m- \ell + f +1} (q^r-1)(q^{r-1}+1) \text{ if $\bar{h}$ is of kind $I$,}\\  q^{\ell m - m-\ell + f +1} (q^r+1)(q^{r-1}-1) \text{ if $\bar{h}$ is of kind $II$.} \end{cases} $$
\end{thm}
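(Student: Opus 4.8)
The plan is to compute the index directly as $[U_m(B):S_u]=|U_m(B)|/|S_u|$, feeding in the two order formulas already available. For $m>2$, Theorem~\ref{su} applied over the ring $B$ (which satisfies all the standing hypotheses, being $\mathcal{R}/\mathcal{P}^\ell$ with its inherited involution) gives
$$|S_u|=|U_{m-2}(B)|\cdot|B|^{m-1}/|\overline{R}|,$$
where $U_{m-2}(B)$ is the unitary group of a rank $m-2$ hermitian form \emph{of the same kind as} $\overline{h}$. Substituting $|B|=q^{\ell}$ and $|\overline{R}|=q^{f}$ gives $|B|^{m-1}/|\overline{R}|=q^{\ell(m-1)-f}$, and feeding in Corollary~\ref{pom2} for both $|U_m(B)|$ and $|U_{m-2}(B)|$ reduces the whole computation to an explicit power of $q$ times the orthogonal ratio $|O_m(q)|/|O_{m-2}(q)|$. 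The case $m=2$ lies outside the scope of Theorem~\ref{su}, but the index was already found in the preceding $m=2$ proposition; one checks the general formula specialises correctly there (for $\ell=2f-1$, case (ii) with $r=1$ reads $2q^{3f-3}(q-1)$, matching part (b) of that proposition), and that the kind~II branch at $m=2$ is vacuous since, by Lemma~\ref{tiso}(b), non-unit lengths are not attained for a type $\{1,-\epsilon\}$ form.

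First I would isolate the pure power of $q$ from the unitary orders. Write $|U_m(B)|=q^{N(m)}|O_m(q)|$ with $N(m)=m^2(\ell-1)/2$ when $\ell=2f-1$ and $N(m)=(m^2(\ell-1)+m)/2$ when $\ell=2f$. Using $m^2-(m-2)^2=4(m-1)$, the difference $N(m)-N(m-2)$ equals $2(\ell-1)(m-1)$ in the odd case and $2(\ell-1)(m-1)+1$ in the even case, the extra $1$ being the residue $(m-(m-2))/2$ of the linear term in $N$. Subtracting the exponent $\ell(m-1)-f$ coming from $|S_u|$, the pre-orthogonal power of $q$ collapses to $q^{(m-1)(\ell-2)+f}$ when $\ell$ is odd and $q^{(m-1)(\ell-2)+f+1}$ when $\ell$ is even. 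This is precisely the uniform shift by $1$ separating cases (i),(ii) from (iii),(iv).

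Next I would evaluate $|O_m(q)|/|O_{m-2}(q)|$ from \cite[Theorem 6.17]{NJ}, splitting on the parity of $m$. For $m$ odd, both orthogonal groups are independent of kind and the ratio is $q^{m-2}(q^{m-1}-1)$; combining with the power above and using $(m-1)(\ell-2)+f+(m-2)=m\ell-m-\ell+f$ yields the kind-free formulas (i) and (iii). For $m=2r$ even, Theorem~\ref{su} guarantees the rank $m-2$ form shares the kind of $\overline{h}$, so $O_m(q)$ and $O_{m-2}(q)$ have the same $\pm$ type; the ratio is then $q^{m-2}(q^r-1)(q^{r-1}+1)$ for kind~I and $q^{m-2}(q^r+1)(q^{r-1}-1)$ for kind~II, giving the two branches of (ii) and (iv). Throughout, Lemma~\ref{tiso}(c) ensures that for $m>2$ primitive vectors of non-unit length exist for either kind, so all cases are genuinely populated.

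The main obstacle is not conceptual but bookkeeping: getting every $q$-exponent right across the four parity combinations of $\ell$ and $m$, and—decisively for the even-$m$ cases—verifying that the kind of the rank $m-2$ form produced by Theorem~\ref{su} matches that of $\overline{h}$, so that the split/non-split types of $O_m(q)$ and $O_{m-2}(q)$ agree. This amounts to checking that the correspondence between kind~I/II and the $\pm$ orthogonal type is preserved when a $\{1,-1\}$ block is removed and the dimension drops by $2$ (the discriminant classes $(-1)^r$ and $(-1)^{r-1}$ both give the split type in their respective even dimensions). Once this correspondence is pinned down, the four stated formulas follow by routine simplification.
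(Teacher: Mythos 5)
Your proposal is correct and takes essentially the same route as the paper's own proof: both compute $[U_m(B):S_u]=|U_m(B)|\,|\overline{R}|/\bigl(|U_{m-2}(B)|\,|B|^{m-1}\bigr)$ by combining Theorem~\ref{su} with Corollary~\ref{pom2}, and then evaluate the ratio $|O_m(q)|/|O_{m-2}(q)|$ from \cite[Theorem 6.17]{NJ} according to the parity of $m$ and the kind of $\bar{h}$, the exponent bookkeeping agreeing in all four cases. The only cosmetic difference is that you check $m=2$ separately against the earlier rank-two proposition, whereas the paper folds it into the general case via the note that the formula of Theorem~\ref{su} remains valid at $m=2$.
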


\begin{proof} Suppose $\ell=2f-1$ is odd. Using Theorem \ref{su} and Corollary \ref{pom2},
$$\begin{array}{rcl}
[U_m(B):S_u] & = & \dfrac{|U_m(B)|}{|S_u|} = \dfrac{|U_m(B)||\overline{R}|}{|U_{m-2}(B)||B|^{m-1}}  \\
             &  & \\
             & = & \dfrac{q^{m^2(\ell-1)/2}|O_m(q)|q^f}{q^{(m-2)^2(\ell-1))/2}|O_{m-2}(q)|q^{\ell(m-1)}} \\
             & & \\
             & = & q^{\ell m - 2m - \ell + 2 + f } \dfrac{|O_m(q)|}{|O_{m-2}(q)|}.
             \end{array}$$
On the other hand, if $\ell=2f$ is even,
$$\begin{array}{rcl}
[U_m(B):S_u] & = & \dfrac{|U_m(B)|}{|S_u|} = \dfrac{|U_m(B)||\overline{R}|}{|U_{m-2}(B)||B|^{m-1}}  \\
             &  & \\
             & = & \dfrac{q^{(m^2(\ell-1)+m)/2}|O_m(q)|q^f}{q^{((m-2)^2(\ell-1)+(m-2))/2}|O_{m-2}(q)|q^{\ell(m-1)}} \\
             & & \\
             & = & q^{\ell m - 2m - \ell+ 3+ f } \dfrac{|O_m(q)|}{|O_{m-2}(q)|}.
             \end{array}$$
Using the orders of finite unitary groups from \cite[Theorem 6.17]{NJ}, we have that
when $m$ is odd,
$$\dfrac{|O_m(q)|}{|O_{m-2}(q)|}=q^{m-2}(q^{m-1}-1),$$
and when $m=2r$ is even,
$$\dfrac{|O_m(q)|}{|O_{m-2}(q)|}=\begin{cases} q^{m-2}(q^r-1)(q^{r-1}+1) \text{ if $\bar{h}$ is of kind $I$,}\\
                                               q^{m-2}(q^r+1)(q^{r-1}-1) \text{ if $\bar{h}$ is of kind $II$.}
                                               \end{cases}$$
It is now straightforward to complete the calculation and obtain the formula in each of the four cases.
\end{proof}

\end{document}